\documentclass{amsart}
\usepackage{amsmath,amscd}
\usepackage{amsthm}
\usepackage{amsfonts}
\usepackage{amssymb}
\usepackage{tikz}
\usepackage{tikz-cd}
\usepackage{mathtools}
\usetikzlibrary{positioning}
\usepackage{graphicx,rotating}
\usepackage{esint}
\usepackage{float}
\usepackage{hyperref}
\usepackage{enumerate}
\usepackage{blkarray}
\usepackage{MnSymbol}

\title{$n$-Kazhdan groups and higher spectral expanders}
\author{Arghya Mondal}
\address{Chennai Mathematical Institute, H1, SIPCOT IT Park, Siruseri, Kelambakkam 603103,
India}
\email{amondal@cmi.ac.in}
\email{mondalarghya1990@gmail.com}
\urladdr{https://sites.google.com/view/arghyamondal/home}
\date{\today}

\subjclass[2020]{22D55, 5C99 (Primary), 22J06, 22D10}

\keywords{Higher expanders, higher Property (T), isolated representations}

\newtheorem{theorem}{Theorem}[section]
\newtheorem{lemma}[theorem]{Lemma}
\newtheorem{prop}[theorem]{Proposition}
\newtheorem{cor}[theorem]{Corollary}

\newtheorem{question}[theorem]{Question}

\newenvironment{definition}[1][Definition]{\begin{trivlist}
\item[\hskip \labelsep {\bfseries #1}]}{\end{trivlist}}

\def\r{\mathbb{R}}
\def\c{\mathbb{C}}

\def\z{\mathbb{Z}}
\def\G{\Gamma}
\def\L{\Lambda}
\def\g{\gamma}
\def\b{\backslash}

\begin{document}
\maketitle

\begin{abstract}
    Let $\G$ be a group of type $F_n$ and let $X$ be the $n$ skeleton of the universal cover of a $K(\G,1)$ simplicial complex with finite $n$ skeleton. We show that if $\G$ is strongly $n$-Kazhdan, then for any family of finite index subgroups $\{\L_i\}_i$, the family of simplicial complexes $\{\L_i\b X\}_i$ are bounded degree $n$-dimensional spectral expanders. Using this we construct new examples of $2$ dimensional spectral expanders.
\end{abstract}

\section{Introduction}
The aim of this article is to answer the following question:
\begin{question}\label{qn}
 Is there a higher dimensional analog of the construction of expander graphs using Cayley graphs of discrete groups having Property (T)?
\end{question}
The construction we are referring to is originally due to Margulis, who used relative Property (T) of the pair $(\text{SL}_2(\z)\ltimes \z^2,\z^2)$ \cite{margulis}, later generalized by Alon and Milman \cite{am}. They considered a discrete group $\G$ with Property (T) whose Cayley graph is $X$ and showed that the family of quotients graphs $\{\L_i\b X\}_i$, where $\{\L_i\}_i$ are finite index subgroups of $\G$, are expanders. We replace expander graphs by \textit{higher dimensional spectral expanders}. Following \cite[Definition 2.3]{lubotzky18}, they are a family of $d$-dimensional bounded degree (each vertex is contained in a bounded number of cells) finite simplicial complexes, with number of vertices going to infinity, such that the \textit{upper Laplacians}, acting on the $i^\text{th}$ simplicial cochain groups with coefficients in $\c$, $0<i<d-1$, have \textit{spectral gaps} bounded below by a positive constant. (See \S\ref{sec-isosha} for definition of upper Laplacian and spectral gap.) The higher analog of having Property (T), equivalently, being \textit{$1$-Kazhdan}, is being \textit{$n$-Kazhdan}. Following \cite{dglt}, a group $\G$ is said to be \textit{$n$-Kazhdan} (resp. \textit{strongly $n$-Kazhdan}) if $H^n(\G,\pi)=0$ (resp. $H^i(\G,\pi)=0, 1\le i\le n$), for any unitary representation $\pi$ of $\G$. To find a higher analog of a Cayley graph, one notes that the Cayley graph of a group $\G$ is the $1$ skeleton of the universal cover of a $K(\G,1)$ CW complex having exactly one vertex. The natural $n$ dimensional analog is the $n$-skeleton of the universal cover of a $K(\G,1)$ complex. To make sure that we get a family of finite complexes we assume that $\G$ is of \textit{type $F_n$}, that is, there exists a $K(\G,1)$ CW complex, whose $n$ skeleton is finite. Note that when $n=1$, this comes for free, since any discrete group with Property (T) is finitely generated, that is, of type $F_1$. Further, in the $n=1$ case, by choosing finite index subgroups that do not contain the set of generators, we can ensure that the family of graphs have no loops, that is, they are 1 dimensional simplicial complexes. For higher dimensions, we use a classical result of Whitehead which says that any finite CW complex is homotopy equivalent to a finite simplicial complex, see \cite[Theorem 13]{whitehead} or \cite[Theorem 2C.5]{hatcher}. Thus we choose a $K(\G,1)$ complex which is a \textit{simplicial} complex with finite $n$-skeleton. We answer  Qn. \ref{qn}. in the following manner.

\begin{theorem}\label{showcase}
Let $\G$ be a group of type $F_n$ and let $X$ be the $n$-skeleton of the universal cover of a $K(\G,1)$ simplicial complex with finite $n$ skeleton. Let  $\{\L_i\}_i$ be a family of finite index subgroups of $\G$. If $\G$ is strongly $n$-Kazhdan  then the family $\{\L_i\b X\}_i$ of simplicial complexes are bounded degree spectral expanders.
\end{theorem}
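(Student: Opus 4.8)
To prove Theorem~\ref{showcase} I would reduce the statement to a spectral property of $\G$ furnished by the strong $n$-Kazhdan hypothesis, the bridge being Shapiro's lemma applied to the finite-index inclusions $\L_i\le\G$. First comes the geometric bookkeeping. As $Y$ is aspherical, $\G$ acts freely on $\tilde Y$, hence on the $\G$-invariant subcomplex $X$, and so does each $\L_i$; after a barycentric subdivision of $X$ if needed (which alters degrees and Laplacian norms only by factors controlled by the fixed finite complex $\G\b X=Y^{(n)}$), the $\L_i\b X$ are finite simplicial complexes and the maps $\L_i\b X\to Y^{(n)}$ are coverings of one and the same finite complex. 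Being local homeomorphisms, these coverings force every vertex of $\L_i\b X$ to lie in exactly as many simplices as the corresponding vertex of $Y^{(n)}$, which gives the uniform degree bound; and the vertex count is $[\G:\L_i]$ times that of $Y^{(n)}$, which we assume tends to infinity. It then remains to bound below, uniformly in $i$, the nonzero spectrum of the upper Laplacians on $C^j(\L_i\b X;\c)$ in the range of degrees $j$ occurring in the definition of an $n$-dimensional spectral expander.

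The key move is to rewrite these complex cochain complexes as equivariant cochain complexes of $X$ with unitary coefficients. Since $[\G:\L_i]<\infty$, the quasi-regular representation $\pi_i:=\ell^2(\L_i\b\G)$ is a finite-dimensional unitary representation of $\G$, and Shapiro's lemma identifies $C^\bullet(\L_i\b X;\c)$ with the $\G$-equivariant cochain complex $C^\bullet_\G(X;\pi_i)$; with the natural $\ell^2$ inner products this is an isometry intertwining the coboundary maps, hence their adjoints, hence the Laplacians. Fixing orbit representatives for the (finitely many, in each degree $\le n$, by type $F_n$) simplices of $X$, the full Laplacian in degree $j$ becomes $\pi_i(D_j)$ for a fixed positive self-adjoint $D_j\in M_{c_j}(\c[\G])$ depending only on $j$ and on $X$. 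Now strong $n$-Kazhdanness says exactly that $H^j(\G;\pi)=0$ for all unitary $\pi$ and $1\le j\le n$; since $X$, the $n$-skeleton of a contractible complex, computes $H^\bullet(\G;-)$ through degree $n-1$, the spectral reformulation of $n$-Kazhdanness developed in \S\ref{sec-isosha} --- the higher analogue of ``Property~(T) $\iff$ spectral gap'', expressed via isolatedness of the trivial representation --- yields that each $D_j$, $1\le j\le n-1$, is invertible in $M_{c_j}(C^*(\G))$; equivalently there is $\epsilon_j>0$ with $\mathrm{spec}\,\pi(D_j)\subseteq[\epsilon_j,\infty)$ for \emph{every} unitary $\pi$. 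Applied to $\pi=\pi_i$, this makes the full Laplacian on $C^j(\L_i\b X;\c)$ invertible with spectral gap at least $\epsilon_j$, uniformly in $i$.

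To finish, pass from the full to the upper Laplacian by Hodge theory on the finite-dimensional space $C^j(\L_i\b X;\c)$: the upper Laplacian $(d^j)^*d^j$ vanishes on $\ker d^j$ and agrees with the full Laplacian on $(\ker d^j)^\perp=\mathrm{im}\,(d^j)^*$ (where the lower Laplacian is zero, as $d^j d^{j-1}=0$), so its nonzero spectrum lies inside that of the full Laplacian and is therefore $\ge\epsilon_j$. Setting $\epsilon=\min_j\epsilon_j$ over the finitely many relevant degrees gives the sought uniform spectral gap, so $\{\L_i\b X\}_i$ is a family of bounded-degree $n$-dimensional spectral expanders. (If that range of degrees includes $j=0$, the expansion of the $1$-skeleton is the classical consequence of Property~(T), which strong $n$-Kazhdanness entails.)

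The crux I expect is the spectral reformulation used in the second paragraph. Already for $n=1$, the bare vanishing $H^j(\G;\pi)=0$ for each individual $\pi$ does not by itself produce a common $\epsilon_j$: one needs the observation that invertibility of $D_j$ may be tested on the universal representation of $C^*(\G)$, so that a norm bound there propagates to all representations, and --- more delicately --- the fact that for a group of type $F_n$ the vanishing of unreduced $H^j(\G;\pi)$ for all unitary $\pi$ is equivalent to the vanishing of the associated \emph{reduced} cohomology for all such $\pi$ (harmonic cochains detect only the latter, so without this equivalence $0$ could lie in $\mathrm{spec}(D_j)$ as an approximate eigenvalue arising from non-closed coboundaries). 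These are precisely the ingredients I would expect \S\ref{sec-isosha} to supply; granting them, the argument above is routine.
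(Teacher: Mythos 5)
Your argument is correct and shares the paper's skeleton --- the isometric Shapiro-type identification $C^\bullet(\L_i\b X,\c)\cong C^\bullet(\G,L^2(\L_i\b\G))$ (Theorem \ref{isoch}) followed by Hodge theory to pass between the full and upper Laplacians --- but it takes a genuinely different route at the one step that matters, namely how the gap is made uniform in $i$. You extract uniformity from the group $C^*$-algebra: since strong $n$-Kazhdanness kills $H^j(\G,\pi)$ for \emph{every} unitary $\pi$, in particular for the universal representation, the full Laplacian $D_j$ is invertible in $M_{c_j}(C^*(\G))$ for $1\le j\le n-1$, and spectral permanence propagates the resulting $\epsilon_j$ to all $\pi_i$ at once. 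This is essentially the Bader--Nowak group-algebra criterion of \cite{bn20} and is valid as stated (note, though, that this machinery lives in \S\ref{sec-redcom}, not \S\ref{sec-isosha}, and the paper never argues through $C^*(\G)$). The paper instead packages the whole family into the single separable representation $\hat{\oplus}_iL^2(\L_i\b\G)$ and shows, via $\sigma(\oplus_iT_i)=\overline{\cup_i\sigma(T_i)}$ (Theorem \ref{unionclosure}) and Proposition \ref{redinv}, that reducedness of $H^{j+1}(\G,\hat{\oplus}_iL^2(\L_i\b\G))$ is \emph{equivalent} to a uniform essential gap for the individual $(\Delta^+_i)_j$. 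That route buys two things yours does not: the if-and-only-if characterization of Corollary \ref{genproptau} (a Property-$(\tau)$-type statement, which a one-directional $C^*$-argument cannot deliver), and the observation that only vanishing for one \emph{separable} coefficient module is needed --- exactly what makes Corollary \ref{new} accessible via Mok's theorem. Your route genuinely consumes the quantifier over all unitary representations. Finally, the reduced-versus-unreduced subtlety you flag at the end does not actually arise in your setting: for $1\le j\le n-1$ the hypothesis gives $H^{j+1}(\G,\pi)=0$, which is automatically reduced, and for the top upper Laplacian on $C^{n-1}$ you only need $H^n(\G,\pi)$ reduced, which again follows from its vanishing.
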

The assumption of $\G$ being strongly $n$-Kazhdan is superfluous in the above statement. All we need is $H^i(\G,\hat{\oplus}_iL^2(\L_i\b\G))=0, 1\le i\le n$. (For an if and only if condition, generalizing the equivalence of Property ($\tau$) and the family of corresponding Cayley graphs being expanders \cite[Theorem 4.3.2]{lubotzky94}, see Corollary \ref{genproptau}). In particular, since $\G$ is finitely generated by assumption, there are finitely many subgroups of a fixed finite index and hence the family $\{\L_i\}_i$ is at most countable. Hence $\hat{\oplus}_iL^2(\L_i\b\G)$ is a separable Hilbert space. Thus, to apply Theorem \ref{showcase}, it is enough to find groups whose cohomology, with coefficients in \textit{separable} Hilbert spaces, vanish up to dimension $n$. The groups in the following result were shown to satisfy this condition for $n=2$ by Mok \cite[Corollary 0.2 (2)]{mok}.
\begin{cor}\label{new}
	Let $\G$ be a torsion free cocompact lattice in the isometry group of a non-Hermitian Riemannian symmetric space of non-compact type and of rank $\ge 6$. Then any family of finite $2$ dimensional simplicial complexes obtained from $\G$, as described in Theorem \ref{showcase}, are spectral expanders.  
\end{cor}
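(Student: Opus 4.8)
The plan is to verify that $\G$ meets the hypotheses required to invoke Theorem \ref{showcase}---in fact the weakened cohomological form noted immediately after its statement---and then apply that theorem. There are three things to check: (i) $\G$ is of type $F_2$ and carries a $K(\G,1)$ simplicial complex with finite $2$-skeleton, so that the construction of Theorem \ref{showcase} makes sense for $\G$; (ii) the coefficient module $\hat{\oplus}_i L^2(\L_i\b\G)$ is a separable Hilbert space; and (iii) $H^i(\G,\hat{\oplus}_i L^2(\L_i\b\G))=0$ for $i=1,2$.

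Point (i) is geometric and routine. Write $S$ for the symmetric space in question; since $\G$ is a lattice in $\mathrm{Isom}(S)$, it acts on the contractible manifold $S$ properly discontinuously and cocompactly, and freely because $\G$ is torsion free while point stabilizers of the action are finite. Hence $M:=\G\b S$ is a closed aspherical manifold with $\pi_1(M)\cong\G$, i.e.\ a $K(\G,1)$. A closed manifold has the homotopy type of a finite CW complex, so by Whitehead's theorem (quoted in the introduction) $M$ is homotopy equivalent to a finite simplicial complex, which is the required model; in particular $\G$ is of type $F_\infty$, a fortiori of type $F_2$. For (ii): $\G$ is finitely generated, so for each $m$ it has only finitely many subgroups of index $m$, whence the family $\{\L_i\}_i$ is at most countable; and each $\L_i\b\G$ being finite, $L^2(\L_i\b\G)$ is finite dimensional, so $\hat{\oplus}_i L^2(\L_i\b\G)$ is a countable Hilbert space direct sum of finite dimensional spaces and is therefore separable.

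Point (iii) is exactly Mok's \cite[Corollary 0.2(2)]{mok}: under the hypotheses placed on $S$ (non-Hermitian, rank $\ge 6$), one has $H^i(\G,\pi)=0$ for $i=1,2$ and every unitary representation $\pi$ of $\G$ on a separable Hilbert space. Applying this to $\pi=\hat{\oplus}_i L^2(\L_i\b\G)$---legitimate by (ii)---and feeding the result into Theorem \ref{showcase} yields that $\{\L_i\b X\}_i$ is a family of bounded degree $2$-dimensional spectral expanders, which is the assertion of the corollary. I do not anticipate a real obstacle: the only delicate point is bookkeeping, namely that what one actually feeds into Theorem \ref{showcase} is the weak cohomological hypothesis recorded after it rather than strong $2$-Kazhdanness for \emph{all} unitary representations---Mok's vanishing theorem is stated only for separable coefficients, which is precisely the generality the quotient construction requires.
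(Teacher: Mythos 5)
Your proposal is correct and follows essentially the same route as the paper: the paper's justification of Corollary \ref{new} is precisely the remark after Theorem \ref{showcase} that only the vanishing of $H^i(\G,\hat{\oplus}_iL^2(\L_i\b\G))$ for $i=1,2$ is needed, that this coefficient module is separable because the family of finite index subgroups is countable, and that Mok's \cite[Corollary 0.2(2)]{mok} supplies the vanishing for separable coefficients. Your additional explicit verification that $\G\b S$ is a closed aspherical manifold, hence a $K(\G,1)$ with finite skeleta, is a point the paper leaves implicit but is handled correctly.
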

Let us explain why these examples are new, but related, both in source and in spirit of proof, to the most well know examples of higher spectral expanders - cocompact lattices in simple algebraic groups $G$ over non-archimedean local fields. These groups act on contractible simplicial complexes $X$, known as the Bruhat-Tits buildings. On the archimedean side, lattices in semisimple linear Lie groups $G$ act on contractible manifolds $X$, known as symmetric spaces. Thus $H^*(\G\b X,\r)\cong H^*(\G,\r)$. Working on the archimedean side, for each $p$, Matsushima \cite{matsushima62} gave a local criterion, involving the curvature tensor, for $H^p(\G\b X,\r)$ to be $G$-invariant. Garland \cite{garland} gave an analogous local criterion, in the non-archimedean setting, for vanishing of $H^p(\G\b X,\r)\cong H^p(\G,\r)$, which proved Serre's conjecture that $H^p(\G,\r)=0$ for $1\le p<\text{rk}(G)$, provided the residue field is large enough. As noted by Borel \cite[\S1]{borel}, the \textit{Garland method} in fact produces a spectral gap for the $p^\text{th}$ upper Laplacian for any finite simplicial complex given such a gap for Laplacians acting on vertices of links of $p-1$ simplices. Due to the local nature of Garland's criterion, if it is satisfied for a finite simplicial complex, it is also satisfied for any finite cover. Thus, the Garland method shows that quotients of Bruhat Tits building, by cocompact lattices in groups over fields with large residue fields, are spectral expanders. Garland's method was generalized by Zuk \cite{zuk}, Pansu \cite{pansu98} and Ballman-\'Swiatkowski \cite{bs}, to show vanishing of group cohomology with \textit{any} twisted coefficents, in particular the first cohomology, giving new examples of groups with Property (T). See also \cite{dj1}, \cite{dj2}, \cite{o1} and \cite{o2}. Examples of 2-Kazhdan groups obtained by the same method was used by de Chiffre, Glebsky, Lubotzky and Thom \cite{dglt} to produce examples of groups that are not Frobenius approximated. On the other hand Matsushima's result was generalized to twisted coefficients by Mok \cite{mok}, which implies Corollary \ref{new}. 

We note here that Garland's local point of view, has been adopted as definition of higher spectral expander by Oppenheim \cite{o3} and exmaples of such expanders have been constructed by Kaufman and Oppenheim \cite{ko1}, \cite{ko2}.

Although Theorem \ref{showcase} is stated in terms of $n$-Kazhdan groups, we think of this property as a combination of two properties -  $H^n(\G,\pi)=\overline{H^n}(\G,\pi)$ and $\overline{H^n}(\G,\pi)=0$, for all unitary representations $\pi$. Here $\overline{H^n}(\G,\pi)$, called the \textit{reduced cohomology of $\G$ with coefficients in $\pi$}, denotes the quotient of the space of cocycles by the \textit{closure} of the space of coboundaries, with respect to a natural topology. See \S\ref{sec-redcom} for details. If a group $\G$ satisfies only the first condition, then it is said to have \textit{Property \emph{(}$\emph{T}_{n-1}$\emph{)}}, a term coined by Bader and Nowak \cite{bn15}, \cite{bn20}. This concept has been crucially used in our proof of Theorem \ref{showcase}. Using an idea from this proof, we make a connection of Property ($\text{T}_n$) to the Fell topology, a result similar to that of  Bergeron and Clozel \cite{bc} in a different setting.

\begin{theorem}\label{iso}
Let $\G$ be a group of type $F_{n+1}$. Then, \emph{(1)} $\G$ has Property \emph{($\text{T}_{n-1}$)} and Property \emph{($\text{T}_n$)} implies \emph{(2)} the subset $\{\pi\in\widehat{\G}:H^n(\G,\pi)\ne 0\}$ is open in $\widehat{\G}$ with respect to the Fell topology.
\end{theorem}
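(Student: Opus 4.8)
The plan is to reformulate the vanishing $H^n(\G,\pi)=0$ as invertibility of a single self-adjoint element of a matrix algebra over $\c[\G]$, to produce for it a spectral gap that is uniform over all $\pi$, and then to read off openness from lower semicontinuity of the norm on $\widehat{\G}$. To set this up, use that $\G$ is of type $F_{n+1}$ to fix a $K(\G,1)$ simplicial complex with finite $(n+1)$-skeleton, and let $C_{\bullet}$ be the cellular chain complex of its universal cover, a free $\z[\G]$-resolution of $\z$ that is finitely generated in degrees $\le n+1$. For a unitary representation $\pi$ on $\mathcal{H}_{\pi}$ one computes $H^{\bullet}(\G,\pi)$ from $C^{\bullet}(\G,\pi)=\operatorname{Hom}_{\z[\G]}(C_{\bullet},\mathcal{H}_{\pi})$; in degrees $\le n+1$ the cochain groups are finite orthogonal direct sums $\mathcal{H}_{\pi}^{k_i}$, and the coboundary map $d_i$ is obtained by applying $\pi$ entrywise to a fixed matrix $D_i$ over $\z[\G]$ that does not depend on $\pi$. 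Setting
\[
T:=D_{n-1}D_{n-1}^{*}+D_n^{*}D_n\in M_{k_n}(\c[\G]),
\]
a self-adjoint element, applying $\pi$ entrywise to $T$ yields the full Laplacian $L_n^{\pi}=d_{n-1}d_{n-1}^{*}+d_n^{*}d_n\ge 0$ on $C^n(\G,\pi)$, whose kernel $\ker d_n\cap\ker d_{n-1}^{*}$ is canonically the reduced cohomology $\overline{H^n}(\G,\pi)$.

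I would then extract a spectral gap for $L_n^{\pi}$. Property $(\mathrm{T}_{n-1})$ says $d_{n-1}$ has closed range and Property $(\mathrm{T}_n)$ says $d_n$, and hence $d_n^{*}$, has closed range. Decomposing $C^n(\G,\pi)$ orthogonally as $\operatorname{im}d_{n-1}\oplus\ker L_n^{\pi}\oplus\operatorname{im}d_n^{*}$, on whose summands $L_n^{\pi}$ restricts to $d_{n-1}d_{n-1}^{*}$, to $0$, and to $d_n^{*}d_n$, these closed-range hypotheses give a bound $\langle L_n^{\pi}\phi,\phi\rangle\ge c_{\pi}\|\phi\|^{2}$ on the first and third summands; hence $\operatorname{Spec}(L_n^{\pi})\subseteq\{0\}\cup[c_{\pi},\infty)$ for some $c_{\pi}>0$. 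Consequently $\overline{H^n}(\G,\pi)=H^n(\G,\pi)$ and $H^n(\G,\pi)=0$ if and only if $0\notin\operatorname{Spec}(L_n^{\pi})$. To make the gap uniform, apply the same argument to the universal representation $\pi_{\mathrm u}$ of $\G$, which is faithful on $C^{*}(\G)$: there is $\delta>0$ with $\operatorname{Spec}(L_n^{\pi_{\mathrm u}})\subseteq\{0\}\cup[\delta,\infty)$. Since every $\pi$ extends to a unital $*$-homomorphism of the unitization $\widetilde{C^{*}(\G)}$, which does not enlarge the spectrum, while the faithful $\pi_{\mathrm u}$ preserves it, one gets $\operatorname{Spec}(L_n^{\pi})\subseteq\operatorname{Spec}_{M_{k_n}(\widetilde{C^{*}(\G)})}(T)=\operatorname{Spec}(L_n^{\pi_{\mathrm u}})\subseteq\{0\}\cup[\delta,\infty)$ for every $\pi$. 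Therefore, for every $\pi$, $H^n(\G,\pi)=0$ if and only if $L_n^{\pi}\ge\delta\cdot I$.

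Finally I would invoke semicontinuity. Put $R:=\|T\|$ and $A:=R\cdot 1-T\in M_{k_n}(\widetilde{C^{*}(\G)})$. Since $0\le L_n^{\pi}\le R$, the condition $L_n^{\pi}\ge\delta I$ is equivalent to $\|\pi(A)\|\le R-\delta$. For $t\ge 0$ and $f_t(x):=\max(x-t^{2},0)$, the set $\{\pi\in\widehat{\G}:\|\pi(A)\|>t\}$ is the complement of $\{\pi:\pi(f_t(A^{*}A))=0\}=\bigcap_{i,j}\{\pi:(f_t(A^{*}A))_{ij}\in\ker\pi\}$, which is hull--kernel closed; hence $\pi\mapsto\|\pi(A)\|$ is lower semicontinuous on $\widehat{\G}$, the Fell topology on $\widehat{\G}$ being the Jacobson topology of $C^{*}(\G)$. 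It follows that $\{\pi\in\widehat{\G}:H^n(\G,\pi)=0\}=\{\pi:\|\pi(A)\|\le R-\delta\}$ is closed, and its complement $\{\pi\in\widehat{\G}:H^n(\G,\pi)\ne 0\}$ is open, which is the assertion.

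I expect the main obstacle to be the uniformity of the spectral gap: each instance of Property $(\mathrm{T}_{n-1})$ and $(\mathrm{T}_n)$ only yields a $\pi$-dependent gap $c_{\pi}$, and it is the passage to the faithful universal representation that upgrades these to a single $\delta>0$, thereby turning a purely cohomological hypothesis into a statement about the Fell topology; this is precisely the mechanism used in the proof of Theorem~\ref{showcase}. The remaining ingredients---Hodge theory for the Laplacian of the cochain complex, in the spirit of the Garland--Borel argument recalled in the introduction, and the $C^{*}$-algebra generalities about spectra and hull--kernel closed sets---are routine.
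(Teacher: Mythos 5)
Your proposal is correct, but it takes a genuinely different route from the paper. Both arguments start from the same Hodge-theoretic setup (the paper's Lemma \ref{conjugate}): the full Laplacian $(\Delta_\pi)_n$ is $\pi$ applied entrywise to a fixed self-adjoint matrix $T$ over $\c\G$, and the two hypotheses give, for each fixed $\pi$, a gap $\sigma((\Delta_\pi)_n)\subseteq\{0\}\cup[c_\pi,\infty)$ together with $\ker(\Delta_\pi)_n\cong\overline{H^n}(\G,\pi)=H^n(\G,\pi)$. From there the proofs diverge. You upgrade $c_\pi$ to a uniform $\delta>0$ by applying the hypotheses to the universal representation and using spectral permanence under the faithful $*$-homomorphism $\pi_{\mathrm u}$ and the unital $*$-homomorphisms $\pi$ (the detour through the unitization is unnecessary since $C^*(\G)$ is already unital for discrete $\G$, and adjoining a unit to a unital algebra can in principle add $0$ to a spectrum --- harmless here); the vanishing locus then becomes the norm sublevel set $\{\pi:\|\pi(R\cdot 1-T)\|\le R-\delta\}$, which is closed by lower semicontinuity of $\pi\mapsto\|\pi(a)\|$ in the Jacobson topology, identified with the Fell topology. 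The paper never produces a uniform gap directly: it proves (Proposition \ref{contfell}) that $\pi\mapsto\inf\sigma((\Delta_\pi)_n)$ is continuous at its zeros, using Rayleigh quotients, polarization into functions of positive type, and the positive-definite-function description of Fell neighbourhoods, and then argues by contradiction, assembling a sequence $\pi_i\notin\mathcal{R}$ converging to some $\pi_0\in\mathcal{R}$ into a Hilbert direct sum whose Laplacian has no essential gap, contradicting ($\text{T}_n$)/($\text{T}_{n-1}$) via Theorem \ref{unionclosure}. Your version is direct rather than by contradiction, yields the quantitative strengthening that $\inf\sigma((\Delta_\pi)_n)$ lies in $\{0\}\cup[\delta,\infty)$ uniformly over $\widehat{\G}$, and outsources the topology to standard $C^*$-algebra facts; the paper's version stays within the elementary description of the Fell topology and reuses the direct-sum machinery already built for Theorem \ref{showcase}.
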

Note that $H^0(\G,\pi)$ is always reduced, thus all groups have Property ($\text{T}_{-1}$). Also Property ($\text{T}_0$) is same as Property (T) \cite[Proposition 29]{bn15}. Thus for $n=0$, statement (1) in the above theorem is just that $\G$ has Property (T). On the other hand $H^0(\G,\pi)\ne 0$ if and only if $\pi$, being irreducible, is the trivial representation. Hence statement (2) is also Property (T), that is, for $n=0$ the implication in Theorem \ref{iso} is in fact an equivalence. 

Now we give a section wise description of the paper. In \S\ref{sec-redcom} we discuss reduced group cohomology and Hodge theory in this context. Corollary \ref{specgap} is the main result this section, it is used in proofs of both Theorem \ref{showcase} and Theorem \ref{iso}.  In \S\ref{sec-isosha} we show that the simplicial chain complex of $\L_i\b X$ can be identified with a chain complex of the group cohomology of $\G$ with twisted coefficients. In \S\ref{sec-proofs} we prove the result which implies Theorem \ref{showcase} and also Theorem \ref{iso}. 

\section{Reduced cohomology}\label{sec-redcom}
\subsection{Welldefinedness} Reduced 1-cohomology of groups has been studied in the context of Property (T), most notably by Shalom \cite{shalom}. More generally, reduced group cohomology of any degree has been studied by Blanc \cite{blanc}, Austin \cite{austin} and Bader and Nowak \cite{bn15}, \cite{bn20}. The cochain complexes that are used to define reduced group cohomology are either induced by the bar resolution \cite[Ch. I, \S5]{brown} or the resolution obtained from the simplicial chain complex of the universal cover of a $K(\G,1)$ simplicial complex. The fact that the different chain complexes, all with appropriate product topology on them, give the same reduced cohomology is well known to experts. But for completeness, we begin with a proof of this fact in an fairly general setting.  

Let $C:=\{C^l,d_l\}_l$ be a cochain complex of topological vector spaces over a topological field $k$ such that each $d_l$ is a continuous map. Since $d_l$ is continuous, $\ker d_l$ is closed. Therefore $\overline{\text{im }d_{l-1}}\subset\ker d_l$ and the quotient $\overline{H^l}(C):=\ker d_l/\overline{\text{im }d_{l-1}}$ is the \textit{$l^\emph{th}$ reduced cohomology} of $C$. Let $C':=\{C'^l,d'_l\}$ be another such cochain complex. If $f$ is a \textit{continuous} chain homomorphism from $C$ to $C'$, then it induces a map $\overline{f}_*:\overline{H^*}(C)\to\overline{H^*}(C')$. Note that if $f$ is null homotopic in the usual sense then $\overline{f}_*$ is $0$. Therefore, if $f:C\to C'$ and $g:C'\to C$ are continuous chain maps such that $fg-I$ and $gf-I$ are null homotopic in the usual sense, then $\overline{H^*}(C)\cong\overline{H^*}(C')$ as topological vector spaces. 

Now suppose $\G$ is a discrete group and $\c\G$ is its group algebra over $\c$. Let $\{\c\G[Y^l]\}_l$ be a free resolution of $\c$, where $\c\G[Y^l]$ is the free module over $\c\G$ with generating set $Y^l$. Let $(\pi,V_\pi)$ be a unitary representation of $\G$. Consider the cochain complex of topological vectors spaces $\{\text{Hom}_{\c\G}(\c\G[Y^l],V_\pi)\}_l$, where the topology on $\text{Hom}_{\c\G}(\c\G[Y^l],V_\pi)$ is induced from the isomorphism
\begin{equation}\label{top}
    \text{Hom}_{\c\G}(\c\G[Y^l],V_\pi)\cong V_\pi^{Y^l},
\end{equation}
and the product topology on $V_\pi^{Y^l}$. The reduced cohomology of this cochain complex is called the \textit{reduced cohomology of $\G$ with coefficients in $\pi$} and is denoted by $\overline{H^*}(\G,\pi)$. The fact that the coboundary maps are continuous follows from Lemma \ref{cont} below. Moreover, $\overline{H^*}(\G,\pi)$ is independent of the choice of free resolution since the chain homotopies induced by the chain homotopy equivalences between different free resolutions are continuous, again by Lemma \ref{cont}.     

\begin{lemma}\label{cont}
Let $f:\c\G[Y]\to\c\G[Z]$ be a $\c\G$ equivariant map. Let $(\pi,V_\pi)$ be a unitary representation of $\G$.
Then $f^*:\emph{Hom}_{\c\G}(\c\G[Z],V_\pi)\to \emph{Hom}_{\c\G}(\c\G[Y],V_\pi)$, with topologies as in \emph{(\ref{top})}, is continuous. 
\end{lemma}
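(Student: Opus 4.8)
The plan is to make the identification (\ref{top}) completely explicit and then reduce continuity to the elementary fact that a map into a product space is continuous if and only if each of its coordinate functions is. First I would unwind (\ref{top}): a $\c\G$-equivariant map $\phi\colon\c\G[Z]\to V_\pi$ is determined by, and freely prescribable as, the tuple $(\phi(z))_{z\in Z}\in V_\pi^Z$, since $\phi$ is determined on generators and extended $\c\G$-linearly; this is precisely the homeomorphism carrying the subspace topology on $\text{Hom}_{\c\G}(\c\G[Z],V_\pi)$ to the product topology on $V_\pi^Z$. Under this identification $f^*\phi=\phi\circ f$ corresponds to the tuple whose $y$-component is $\phi(f(y))$.

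Next I would expand $f$ on generators: write $f(y)=\sum_{z\in Z}a_{y,z}\,z$ with $a_{y,z}\in\c\G$, where only finitely many $a_{y,z}$ are nonzero because $\c\G[Z]=\bigoplus_{z\in Z}\c\G$ and $f(y)$ is a single element of this direct sum. Writing each $a_{y,z}=\sum_{g\in\G}c^{y,z}_g\,g$ as a finite $\c$-combination of group elements, we get $\phi(f(y))=\sum_{z}\pi(a_{y,z})\phi(z)$, where $\pi(a_{y,z}):=\sum_g c^{y,z}_g\,\pi(g)$ is a finite linear combination of the unitaries $\pi(g)$, hence a bounded, in particular continuous, linear operator on $V_\pi$. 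So in coordinates $f^*\colon V_\pi^Z\to V_\pi^Y$ is the map $(v_z)_z\mapsto(w_y)_y$ with $w_y=\sum_{z}\pi(a_{y,z})v_z$, a \emph{finite} sum.

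Finally I would conclude: to check that $f^*$ is continuous it suffices to check, for each $y\in Y$, that the composite $V_\pi^Z\xrightarrow{\,f^*\,}V_\pi^Y\xrightarrow{\,\mathrm{pr}_y\,}V_\pi$ is continuous. But $\mathrm{pr}_y\circ f^*=\sum_{z}\pi(a_{y,z})\circ\mathrm{pr}_z$ is a finite sum, each summand being the coordinate projection $\mathrm{pr}_z\colon V_\pi^Z\to V_\pi$ (continuous by definition of the product topology) followed by the bounded operator $\pi(a_{y,z})$; since $V_\pi$ is a topological vector space, a finite sum of continuous maps into it is again continuous. Hence every coordinate of $f^*$ is continuous, so $f^*$ is continuous.

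There is no serious obstacle here; the one point that genuinely matters is the finiteness of the support of $f(y)$, since it is exactly what turns the expressions for $w_y$ and for $\mathrm{pr}_y\circ f^*$ into honest finite sums and lets us avoid any question of convergence. Everything else is the formal behaviour of the product topology together with the boundedness of the operators $\pi(g)$.
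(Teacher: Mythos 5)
Your proof is correct and follows essentially the same route as the paper: both reduce continuity to the coordinate functions via the identification $\mathrm{Hom}_{\c\G}(\c\G[Z],V_\pi)\cong V_\pi^{Z}$, exploit the finiteness of the expansion $f(y)=\sum_i \lambda_i\g_i z_i$, and use that each $\pi(\g)$ is bounded. The only (cosmetic) difference is that the paper verifies pointwise convergence of nets with an explicit norm estimate, whereas you invoke the universal property of the product topology and continuity of finite sums in a topological vector space.
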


\begin{proof}
Let $\{\phi_\alpha\}_\alpha\subset\text{Hom}_{\c\G}(\c\G[Z],V_\pi)$ be a net such that $\phi_\alpha\to\phi$. We want to show that $f^*\phi_\alpha\to f^*\phi$. Convergence with respect to product topology is point wise convergence. Therefore we have to show $f^*\phi_\alpha(y)\to f^*\phi(y)$ in $V_\pi$, for all $y\in Y$. Let $f(y)=\sum_{i=1}^n\lambda_i\g_iz_i$, where $\lambda_i\in\c, \g_i\in\G$ and $z_i\in Z$. Then the following computation implies the convergence.
\begin{align*}
    &\|(f^*\phi_\alpha-f^*\phi)(y)\|=\|(\phi_\alpha-\phi)(f(y))\|=\|(\phi_\alpha-\phi)(\sum_{i=1}^n\lambda_i\g_iz_i)\|\\
    \le~&\sum_{i=1}^n|\lambda_i|\|\pi(\g_i)(\phi_\alpha-\phi)(z_i)\|=\sum_{i=1}^n|\lambda_i|\|(\phi_\alpha-\phi)(z_i)\|.\qedhere
\end{align*}
\end{proof}


\subsection{Hodge theory}\label{abshodge} Fix a natural number $n$ and suppose we have a free resolution $\{\c\G[Y^l]\}_l$, where $Y^l$ is finite for all $l\le n$. This happens in particular when $\G$ is of type $F_n$ and $Y^l$ is the set of $l$ simplices of a $K(\G,1)$ simplicial complex $Y$ with finite $n+1$ skeleton. Then we can identify each element of $Y^l$ with a fixed lift of it in the universal cover $X$ of $Y$. Then the elements of $Y^l$ can be thought of as orbit representatives of the free $\G$ action on the set $X^l$ of $l$ simplices in $X$. Hence the simplicial chain complex of $X$ can be identified with $\{\c\G[Y^l]\}_l$. Since $Y^l$ is a finite set, the RHS of (\ref{top}) is a direct sum of finitely many Hilbert spaces and hence is a Hilbert space. Thus for $l\le n$ we have a cochain complex of Hilbert spaces and bounded maps. This is a familiar set up in $L^2$-cohomology. The abstract form in which we need this is given in \cite[\S3]{bn20}.  We will summarize some results from \cite[\S3]{bn20} that we need.

We say a cochain complex $C=\{C^l,d_l\}_l$ is a \textit{Hilbert cochain complex} if $C^l$ is a Hilbert space and $d_l$ is a bounded operator, for each $l$. In this case we can define the adjoint of $d_l$, which we denote by $\partial_{l+1}$.
\begin{center}
\begin{tikzcd}
C^{l-1} \arrow[r, shift left=.5ex, "d_{l-1}"]  & C^l \arrow[r, shift left=.5ex, "d_l"]\arrow[l, shift left=.5ex, "\partial_l"] 
 & C^{l+1} 
\arrow[l, shift left=.5ex, "\partial_{l+1}"] 
\end{tikzcd}
\end{center}
The \textit{upper, lower and full Laplacian for $C$ in degree $l$} are defined as
\begin{equation*}
    \Delta^+_l:=\partial_{l+1}d_l,~ \Delta^-_l:=d_{l-1}\partial_l,~\Delta_l:=\Delta^+_l+\Delta^-_l,
\end{equation*}
respectively. The space $C^l$ can be written as an orthogonal direct sum of the following closed subspaces.
\begin{align*}
    C^l_0:=\ker d_l\cap\ker\partial_l,~ C^l_-&:=\overline{\text{im }d_{l-1}},~ C^l_+:=\overline{\text{im }\partial_{l+1}}.\\
    C^l=C^l_-&\oplus C^l_0\oplus C^l_+.
\end{align*}
The kernel of $d_l$ is $C^l_0\oplus C^l_-$ and that of $\partial_l$ is $C^l_+\oplus C^l_0$. An immediate implication is $\overline{H^l}(C)\cong C^l_0$. The operators $d_l$ and $\partial_l$ can be written as  compositions
\begin{align*}
    d_l:C^l\twoheadrightarrow C^l_+\xrightarrow{\bar{d}_l} C^{l+1}_-\hookrightarrow C^{l+1},\\
    \partial_l:C^l\twoheadrightarrow C^l_-\xrightarrow{\bar{\partial}_l} C^{l-1}_+\hookrightarrow C^{l-1},
\end{align*}
where first maps in both compositions are orthogonal projections, the last maps are inclusions and the middle maps $\bar{d}_l,\bar{\partial}_l$ are injective with dense images.  Define
\begin{equation}\label{bar}
    \bar{\Delta}^+_l:=\bar{\partial}_{l+1}\bar{d}_l:C^l_+\to C^l_+,~ \bar{\Delta}^-_l:=\bar{d}_{l-1}\bar{\partial}_l:C^l_-\to C^l_-.
\end{equation}
The following result is crucial for us, we supply a proof for the reader's convenience.

\begin{prop}\label{redinv}\cite[Proposition 16]{bn20} Let $C$ be a Hilbert cochain complex. The following are equivalent. \emph{(1)} $H^{l+1}(C)$ is reduced. \emph{(2)} $\bar{\Delta}^+_l$ is invertible. \emph{(3)}  $\bar{\Delta}^-_{l+1}$ is invertible. 
\end{prop}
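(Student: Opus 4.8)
The plan is to work through the three equivalences by unwinding the factorizations in \eqref{bar} and exploiting the orthogonal decomposition $C^l = C^l_- \oplus C^l_0 \oplus C^l_+$. First I would observe that the reducedness of $H^{l+1}(C)$ is, by the remark $\overline{H^{l+1}}(C) \cong C^{l+1}_0$ together with the definition $H^{l+1}(C) = \ker d_{l+1}/\operatorname{im} d_l$ and $\ker d_{l+1} = C^{l+1}_0 \oplus C^{l+1}_-$, equivalent to the statement that $\operatorname{im} d_l = \overline{\operatorname{im} d_l} = C^{l+1}_-$; that is, $H^{l+1}(C)$ is reduced if and only if $d_l$ has closed range. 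So the whole proposition reduces to showing: $d_l$ has closed range $\iff$ $\bar{\Delta}^+_l$ is invertible $\iff$ $\bar{\Delta}^-_{l+1}$ is invertible.

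For the equivalence with $\bar{\Delta}^+_l$: the map $\bar d_l \colon C^l_+ \to C^{l+1}_-$ is injective with dense image, and its adjoint is $\bar\partial_{l+1} \colon C^{l+1}_- \to C^l_+$. A standard fact is that $\bar d_l$ has closed range if and only if $\bar d_l^* \bar d_l$ is invertible on $C^l_+$ (bounded below plus self-adjoint gives invertibility), and since $\bar d_l$ is already injective with dense image, "closed range" is the same as "surjective," i.e.\ "bijective," i.e.\ "invertible." Now I need to match $\bar d_l^*\bar d_l$ with $\bar\Delta^+_l = \bar\partial_{l+1}\bar d_l$; this requires identifying $\bar\partial_{l+1}$ with $\bar d_l^*$, which follows because $\partial_{l+1}$ is defined as the adjoint of $d_l$ and the bar-maps are just the restrictions to the relevant summands (the projections and inclusions in the factorizations are adjoints of each other). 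Finally, "range of $\bar d_l$ closed in $C^{l+1}_-$" is equivalent to "range of $d_l$ closed in $C^{l+1}$," because $d_l$ is the composition $C^l \twoheadrightarrow C^l_+ \xrightarrow{\bar d_l} C^{l+1}_- \hookrightarrow C^{l+1}$ and the outer maps are a surjection and a closed embedding. Chaining these gives (1) $\iff$ (2).

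For (2) $\iff$ (3): both $\bar\Delta^+_l = \bar\partial_{l+1}\bar d_l$ and $\bar\Delta^-_{l+1} = \bar d_l \bar\partial_{l+1}$ are built from the single pair of adjoint operators $\bar d_l \colon C^l_+ \to C^{l+1}_-$ and $\bar\partial_{l+1} = \bar d_l^* \colon C^{l+1}_- \to C^l_+$. One is $T^*T$ and the other is $TT^*$ for $T = \bar d_l$, and it is a general Hilbert-space fact that $T^*T$ is invertible if and only if $TT^*$ is invertible (both are equivalent to $T$ being bounded below with dense range, hence bijective, hence invertible). Alternatively one can just note $\bar d_l$ intertwines the two: $\bar d_l \bar\Delta^+_l = \bar\Delta^-_{l+1} \bar d_l$, and since $\bar d_l$ is injective with dense image, invertibility of one operator forces invertibility of the other. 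So (2) $\iff$ (3) is essentially formal.

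The main obstacle, and the only place where care is genuinely needed, is the bookkeeping in the first equivalence: one must be precise that $\bar\partial_{l+1}$ really is the adjoint of $\bar d_l$ as operators between the spaces $C^l_+$ and $C^{l+1}_-$ (not merely related to the adjoint of $d_l$ on the ambient spaces), and one must correctly transfer the closed-range property back and forth between the "bar" picture on the summands and the original picture on $C^l, C^{l+1}$. I would spell out the adjointness by computing $\langle d_l x, y\rangle = \langle \bar d_l(\operatorname{pr}_+ x), \operatorname{pr}_- y\rangle$ for $x \in C^l$, $y \in C^{l+1}$ and comparing with $\langle x, \partial_{l+1} y\rangle = \langle \operatorname{pr}_+ x, \bar\partial_{l+1}(\operatorname{pr}_- y)\rangle$, using that orthogonal projections are self-adjoint idempotents. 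Everything else is the two standard functional-analytic lemmas ($T$ has closed range $\iff$ $T^*T$ invertible when $T$ is injective with dense range; $T^*T$ invertible $\iff$ $TT^*$ invertible), which I would either cite or dispatch in a line each.
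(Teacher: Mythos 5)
Your proof is correct and follows essentially the same route as the paper: the key step is identifying reducedness of $H^{l+1}(C)$ with $\bar d_l$ being a bijection, and then transferring this to invertibility of $\bar\Delta^+_l=\bar\partial_{l+1}\bar d_l$ via the adjoint relation $\bar\partial_{l+1}=\bar d_l^*$, using that $\bar d_l$ and $\bar\partial_{l+1}$ are injective with dense image. One correction to the wording in the $(2)\Leftrightarrow(3)$ step: the assertion that ``$T^*T$ is invertible if and only if $TT^*$ is invertible'' is \emph{not} a general Hilbert-space fact. For $T\colon\c\to\c^2$, $x\mapsto(x,0)$, one has $T^*T=\mathrm{id}_\c$ invertible while $TT^*$ is a rank-one orthogonal projection on $\c^2$, hence not invertible. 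The equivalence does hold here precisely because $\bar d_l$ is injective with dense range — a hypothesis you do invoke — but it should be stated as the hypothesis rather than as a ``general fact.'' Alternatively, once $(1)\Leftrightarrow(2)$ is in place, $(1)\Leftrightarrow(3)$ follows by the mirror-image argument applied to $\bar\partial_{l+1}$ (as the paper notes), and $(2)\Leftrightarrow(3)$ is then automatic without any appeal to a $T^*T$ versus $TT^*$ comparison.
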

\begin{proof}
We will show (1)$\Leftrightarrow$(2). The proof of (1)$\Leftrightarrow$(3) is similar. $H^{l+1}(C)$ is reduced if and only if $d_l$ is surjective onto $\overline{\text{im }d_l}=C^{l+1}_-$, if and only if $\bar{d}_l$ is an isomorphism. Hence $\bar{\partial}_{l+1}$, which is adjoint of $\bar{d}_l$, is an isomorphism. Therefore $\bar{\Delta}^+_l=\bar{\partial}_{l+1}\bar{d}_l$ is invertible. On the other hand, suppose $\bar{\Delta}^+_l$ is invertible. Then $\bar{\partial}_{l+1}$ is surjective. Since it is already injective, therefore it is an isomorphism. Hence its adjoint $\bar{d}_l$ is also an isomorphism.
\end{proof}


\subsection{Hilbert direct sum} 
Recall that the \textit{Hilbert direct sum} $\hat{\oplus}_iH_i$ is the completion of the direct sum $\oplus_iH_i$ of a family $\{H_i\}_i$ of Hilbert spaces. If $T_i$ are bounded operators on $H_i$ such that $\sup_i\|T_i\|$ is finite, then the induced map on the dense subspace $\oplus_iH_i$ of $\hat{\oplus}_iH_i$ extends to a unique bounded operator $\oplus_i T_i$ on $\hat{\oplus}_iH_i$, called the \textit{direct sum of the operators $T_i$}.

\begin{definition}
We say a family of Hilbert cochain complexes $\{C_i=\{C^l_i,(d_i)_l\}_l\}_i$ has \textit{uniformly bounded coboundaries} if  $\sup_i\|(d_i)_l\|$ is finite for each $l$. In this case, the Hilbert cochain complex obtained by taking the Hilbert direct sum of the cochain spaces $C^l_i$ and the direct sum of the coboundary operators $(d_i)_l$ will be called the \textit{Hilbert direct sum of $C_i$} and will be denoted by $\hat{\oplus}C_i$.  
\end{definition}

We need the following fact about spectrum of direct sum of self adjoint operators.
\begin{theorem}\label{unionclosure}\cite[Theorem 2.23]{teschl}
Let $\{H_i\}_i$ be a family of Hilbert spaces. Let $T_i$ be a self adjoint operator on $H_i$, for all $i$, such that $\sup_i\|T_i\|$ is finite. Then
\begin{equation*}
    \sigma(\oplus_i T_i)=\overline{\cup_i\sigma(T_i)}.
\end{equation*}
\end{theorem}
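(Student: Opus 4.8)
The plan is to establish the two inclusions separately; in both directions the essential structural fact is that each summand $H_i$, regarded as a closed subspace of $H:=\hat{\oplus}_i H_i$, is invariant under $T:=\oplus_i T_i$ (as is its orthogonal complement $\hat{\oplus}_{j\ne i}H_j$), with $T$ restricting to $T_i$ on $H_i$. Note first that $T$ is a well-defined bounded self-adjoint operator, since $\sup_i\|T_i\|<\infty$ and each $T_i$ is self-adjoint.

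For $\overline{\cup_i\sigma(T_i)}\subseteq\sigma(\oplus_i T_i)$ I would argue as follows. As $\sigma(T)$ is closed it suffices to fix $i$ and show $\sigma(T_i)\subseteq\sigma(T)$, equivalently that $\lambda\notin\sigma(T)$ implies $\lambda\notin\sigma(T_i)$. If $T-\lambda$ has bounded inverse $S$, then because $T-\lambda$ is injective and preserves both $H_i$ and $H_i^{\perp}$, so does $S$: writing $Sv=w_1+w_2$ with $v\in H_i$, $w_1\in H_i$, $w_2\in H_i^{\perp}$, and applying $T-\lambda$ shows the $H_i^{\perp}$-component $(T-\lambda)w_2$ must vanish, whence $w_2=0$. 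Then $S|_{H_i}$ is a bounded two-sided inverse of $(T-\lambda)|_{H_i}=T_i-\lambda$, so $\lambda\notin\sigma(T_i)$.

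For the reverse inclusion $\sigma(\oplus_i T_i)\subseteq\overline{\cup_i\sigma(T_i)}$ I would take $\lambda\notin\overline{\cup_i\sigma(T_i)}$, choose $\delta>0$ with $\operatorname{dist}(\lambda,\sigma(T_i))\ge\delta$ for all $i$, and use self-adjointness of $T_i$ (via the spectral theorem, or the continuous functional calculus applied to $z\mapsto(z-\lambda)^{-1}$) to obtain the uniform bound $\|(T_i-\lambda)^{-1}\|=\operatorname{dist}(\lambda,\sigma(T_i))^{-1}\le\delta^{-1}$. Because this estimate is independent of $i$, the direct sum $S:=\oplus_i(T_i-\lambda)^{-1}$ is a bounded operator on $H$, and on the dense subspace $\oplus_i H_i$ one checks directly that $S(T-\lambda)=(T-\lambda)S=I$; by continuity these identities pass to all of $H$, so $\lambda\notin\sigma(T)$.

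I expect the only non-routine point to be the uniform resolvent estimate in the last paragraph: this is exactly where the hypothesis that the $T_i$ are self-adjoint, rather than merely uniformly bounded, is indispensable, since for general bounded operators $\overline{\cup_i\sigma(T_i)}$ can be a proper subset of $\sigma(\oplus_i T_i)$. Everything else is the standard bookkeeping of block-diagonal operators on a Hilbert direct sum.
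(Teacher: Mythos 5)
Your proof is correct. Note that the paper does not prove this statement at all: it is quoted as Theorem 2.23 of Teschl's book, so there is no in-paper argument to compare against. Your two inclusions are exactly the standard textbook proof: the block-diagonal invariance argument showing the resolvent of $\oplus_iT_i$ restricts to the resolvent of each $T_i$ (giving $\sigma(T_i)\subseteq\sigma(\oplus_iT_i)$ and hence, by closedness of the spectrum, one inclusion), and the uniform resolvent bound $\|(T_i-\lambda)^{-1}\|=\operatorname{dist}(\lambda,\sigma(T_i))^{-1}\le\delta^{-1}$ from self-adjointness to build a bounded inverse of $\oplus_iT_i-\lambda$ for the other. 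You are also right to flag that the uniform estimate is the one place where self-adjointness (or at least normality) is genuinely used; for general uniformly bounded operators only the inclusion $\overline{\cup_i\sigma(T_i)}\subseteq\sigma(\oplus_iT_i)$ survives.
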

\begin{definition}
Let $T$ be a positive operator on a Hilbert space. We say $T$ has an \textit{essential gap} if there exists $\epsilon>0$ such that $\sigma(T)\subset \{0\}\cup [\epsilon,\infty)$
\end{definition}

\begin{prop}\label{abspecgap}
Let $\{C_i\}_i$ be a family of Hilbert cochain complexes with uniformly bounded coboundaries. For any $l$, the following are equivalent.
\begin{enumerate}
    \item[\emph{(1)}] $H^{l+1}(\hat{\oplus}_iC_i)$ is reduced.
    \item[\emph{(2)}] $(\Delta^-_i)_{l+1}$, for all $i$, admit a uniform essential gap. 
    \item[\emph{(3)}]$(\Delta^+_i)_l$, for all $i$, admit a uniform essential gap.
 \end{enumerate}
\end{prop}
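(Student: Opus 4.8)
The plan is to reduce everything to Proposition \ref{redinv}, Theorem \ref{unionclosure}, and the orthogonal decomposition $C^l=C^l_-\oplus C^l_0\oplus C^l_+$ recalled above, the point being that the interval $\{0\}\cup[\epsilon,\infty)$ is closed and so interacts harmlessly with the closure appearing in Theorem \ref{unionclosure}.

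First I would record, for a single Hilbert cochain complex $C$, that $H^{l+1}(C)$ is reduced if and only if $\Delta^+_l$ has an essential gap (and likewise $H^{l+1}(C)$ is reduced iff $\Delta^-_{l+1}$ has an essential gap). Indeed, $\Delta^+_l=\partial_{l+1}d_l$ kills $\ker d_l=C^l_0\oplus C^l_-$ and, on $C^l_+=(\ker d_l)^\perp$, equals $\bar\partial_{l+1}\bar d_l=\bar\Delta^+_l$, so $\Delta^+_l=0\oplus\bar\Delta^+_l$ with respect to $C^l=\ker d_l\oplus C^l_+$; hence $\sigma(\bar\Delta^+_l)\subseteq\sigma(\Delta^+_l)\subseteq\{0\}\cup\sigma(\bar\Delta^+_l)$. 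Since $\bar\Delta^+_l$ is a bounded positive operator, it is invertible iff $0\notin\sigma(\bar\Delta^+_l)$ iff $\sigma(\bar\Delta^+_l)\subseteq[\epsilon,\infty)$ for some $\epsilon>0$, and by the displayed inclusions this is precisely the assertion that $\Delta^+_l$ has an essential gap. Proposition \ref{redinv} identifies this with $H^{l+1}(C)$ being reduced; the same argument with $\bar\Delta^-_{l+1}$, $\Delta^-_{l+1}$ and the equivalence (1)$\Leftrightarrow$(3) of Proposition \ref{redinv} handles the lower Laplacian. (The edge cases $C^l_+=0$ or $\ker d_l=0$ should be checked, but are harmless.)

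Next I would observe that for a family with uniformly bounded coboundaries the adjoint of the direct-sum coboundary $\oplus_i(d_i)_l$ of $\hat{\oplus}_iC_i$ is $\oplus_i(\partial_i)_{l+1}$ (the adjoint of a direct sum of uniformly bounded operators is the direct sum of adjoints), so the upper and lower Laplacians of $\hat{\oplus}_iC_i$ in the relevant degrees are $\oplus_i(\Delta^+_i)_l$ and $\oplus_i(\Delta^-_i)_{l+1}$. These are self-adjoint and positive with norms bounded by $\sup_i\|(d_i)_l\|^2<\infty$, so Theorem \ref{unionclosure} applies. Combining the two steps: by Step 1 applied to $\hat{\oplus}_iC_i$, condition (1) is equivalent to the existence of $\epsilon>0$ with $\sigma\big(\oplus_i(\Delta^+_i)_l\big)\subseteq\{0\}\cup[\epsilon,\infty)$; by Theorem \ref{unionclosure} the left-hand side equals $\overline{\bigcup_i\sigma((\Delta^+_i)_l)}$, and since $\{0\}\cup[\epsilon,\infty)$ is closed this containment holds iff $\bigcup_i\sigma((\Delta^+_i)_l)\subseteq\{0\}\cup[\epsilon,\infty)$, i.e.\ iff every $(\Delta^+_i)_l$ satisfies $\sigma((\Delta^+_i)_l)\subseteq\{0\}\cup[\epsilon,\infty)$, which is exactly (3). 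The equivalence (1)$\Leftrightarrow$(2) is identical with $\Delta^-_{l+1}$ in place of $\Delta^+_l$.

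The main obstacle is minor: it is the bookkeeping in the middle step, namely verifying that passing to the Hilbert direct sum commutes with forming adjoints and hence with forming the Laplacians, together with the trivial but necessary verification of the degenerate cases in Step 1. The two genuinely nontrivial inputs — Proposition \ref{redinv} (reducedness $\Leftrightarrow$ invertibility of $\bar\Delta$) and Theorem \ref{unionclosure} (spectrum of a direct sum is the closure of the union of spectra) — do all the real work, and the closedness of $\{0\}\cup[\epsilon,\infty)$ is precisely what lets the closure in Theorem \ref{unionclosure} be absorbed without cost.
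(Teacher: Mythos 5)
Your argument is correct and follows essentially the same route as the paper: both reduce to Proposition \ref{redinv} via the spectral inclusions relating $\Delta^{\pm}$ to $\bar\Delta^{\pm}$, and then invoke Theorem \ref{unionclosure} together with the closedness of $\{0\}\cup[\epsilon,\infty)$ to convert the essential gap of the direct-sum Laplacian into a uniform essential gap for the summands. Your write-up is somewhat more careful than the paper's (it records both spectral inclusions and the compatibility of adjoints with Hilbert direct sums explicitly), but no new idea is involved.
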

\begin{proof}
We will show (1)$\Leftrightarrow$(2). The argument for (1)$\Leftrightarrow$(3) is entirely analogous. Let $\Delta^-_{l+1}$ denote the lower Laplacians in degree $l+1$ of the cochain complex $\hat{\oplus}_iC_i$. Note that $\sigma(\Delta^-_{l+1})\subset\{0\}\cup\sigma(\bar{\Delta}^-_{l+1})$. Since, by definition, any operator is invertible if and only if its spectrum does not contain $0$, therefore $\bar{\Delta}^-_{l+1}$ is invertible if and only if $\Delta^-_{l+1}$ has an essential gap. By Theorem \ref{unionclosure}, this is equivalent to a uniform essential gap for $(\Delta_i^-)_{l+1}$ for all $i$. On the other hand, by Proposition \ref{redinv}, $\bar{\Delta}^-_{l+1}$ is invertible if and only if $H^{l+1}(\hat{\oplus}_iC_i)$ is reduced. This completes the proof.
\end{proof}
We will apply the above result to group cohomology. First we need a lemma.
\begin{lemma}\label{unifbddcobd}
Let $\{\c\G[Y^l]\}_{l\le n}$ be a partial free resolution of $\c$ by $\c\G$ modules, such that each $Y^l$ is finite. For each $l\le n$, there exists $M_l$ such that for any unitary representation $(\pi,V_\pi)$ of $\G$ the $l^\text{th}$ coboundary map $d^\pi_l$ of the cochain complex $\{\emph{Hom}_{\c\G}(\c\G[Y^l],V_\pi)\}_{l\le n}$ satisfies $\|d^\pi_l\|\le M_l$.
\end{lemma}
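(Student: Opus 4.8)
The plan is to write down the coboundary map $d^\pi_l$ explicitly using the identification (\ref{top}), and to observe that its operator norm depends only on the combinatorics of the free resolution, not on the representation $\pi$. Concretely, fix $l\le n$. For each generator $y\in Y^{l+1}$, its image $\partial(y)$ under the differential of the free resolution $\{\c\G[Y^\bullet]\}$ can be written as a finite $\c\G$-linear combination $\partial(y)=\sum_{j=1}^{k_y}\lambda_{y,j}\,\g_{y,j}\,z_{y,j}$ with $\lambda_{y,j}\in\c$, $\g_{y,j}\in\G$ and $z_{y,j}\in Y^l$. Since $Y^l$ and $Y^{l+1}$ are both finite, there are only finitely many such data, so the quantity $M_l:=\max_{y\in Y^{l+1}}\sum_{j=1}^{k_y}|\lambda_{y,j}|$ is a finite number depending only on the resolution.

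Next I would carry out the norm estimate. Under (\ref{top}), an element $\phi\in\mathrm{Hom}_{\c\G}(\c\G[Y^l],V_\pi)$ is the tuple $(\phi(z))_{z\in Y^l}\in V_\pi^{Y^l}$, with squared norm $\sum_{z\in Y^l}\|\phi(z)\|^2$, and $d^\pi_l\phi$ is the tuple $\big((d^\pi_l\phi)(y)\big)_{y\in Y^{l+1}}$ where $(d^\pi_l\phi)(y)=\phi(\partial y)=\sum_{j=1}^{k_y}\lambda_{y,j}\,\pi(\g_{y,j})\phi(z_{y,j})$. Using unitarity of $\pi$ and the triangle inequality exactly as in the proof of Lemma~\ref{cont},
\begin{equation*}
\|(d^\pi_l\phi)(y)\|\le\sum_{j=1}^{k_y}|\lambda_{y,j}|\,\|\phi(z_{y,j})\|\le M_l\max_{z\in Y^l}\|\phi(z)\|\le M_l\|\phi\|.
\end{equation*}
Summing the squares over the finite set $Y^{l+1}$ gives $\|d^\pi_l\phi\|^2\le |Y^{l+1}|\,M_l^2\,\|\phi\|^2$, so $\|d^\pi_l\|\le M_l\sqrt{|Y^{l+1}|}$, a bound independent of $\pi$. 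Replacing $M_l$ by this latter constant (still depending only on the resolution) completes the argument.

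There is really no serious obstacle here; the statement is a bookkeeping consequence of the two finiteness hypotheses — finitely many generators in each degree $\le n$, and each differential being a \emph{finite} $\c\G$-linear combination — together with unitarity of $\pi$, which is what keeps $\|\pi(\g_{y,j})\phi(z_{y,j})\|=\|\phi(z_{y,j})\|$. The only point worth stating carefully is that the constant must be extracted from the fixed differentials of the chosen free resolution before $\pi$ enters the picture, so that it is genuinely uniform over all unitary $\pi$; once that is done the estimate is the same one-line computation as in Lemma~\ref{cont}. (If one prefers a cleaner constant, one can instead bound $\|d^\pi_l\phi\|^2=\sum_y\|\sum_j\lambda_{y,j}\pi(\g_{y,j})\phi(z_{y,j})\|^2$ by a Cauchy–Schwarz estimate $\le\sum_y k_y\sum_j|\lambda_{y,j}|^2\|\phi(z_{y,j})\|^2$ and regroup, but the crude bound above already suffices.)
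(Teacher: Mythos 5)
Your proposal is correct and follows essentially the same route as the paper: write the differential of the resolution on the finite generating set $Y^{l+1}$ as a finite $\c\G$-linear combination, use unitarity to get $\|\pi(\sum_k\lambda_k\g_k)\|\le\sum_k|\lambda_k|$ independently of $\pi$, and assemble a constant from the finitely many coefficients. The only difference is cosmetic (you work with evaluation of $\phi$ on generators while the paper phrases the same computation via $\c\G[Y^l]^*\otimes_{\c\G}V_\pi$ and dual bases), and your constant $M_l\sqrt{|Y^{l+1}|}$ serves the same purpose as the paper's $(\sum_{i,j}(M^l_{ij})^2)^{1/2}$.
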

\begin{proof}
Since $Y^l$ is finite, $\text{Hom}_{\c\G}(\c\G[Y^l],V_\pi)\cong \c\G[Y^l]^*\otimes_{\c\G}V_\pi$, where $\c\G[Y^l]^*:=\text{Hom}_{\c\G}(\c\G[Y^l],\c\G)$ is the dual right $\c\G$ module. Let $\delta_j$ denote the $j^\text{th}$ boundary map for the chain complex  $\{\c\G[Y^l]\}_{l\le n}$. Let $\delta_j^*$ denote the dual $\c\G$ morphism. Then $d^\pi_l=\delta_{l+1}^*\otimes 1_\pi$, where $1_\pi$ is the identity map on $V_\pi$ (cf. Lemma \ref{homtensor}). Let $\{f^l_j:j\in I_l\}$ be the basis of $\c\G[Y^l]^*$ dual to the basis $Y^l$ of $\c\G[Y^l]$. For each $j\in I_l$, $\delta_{l+1}^*(f^l_j)=\sum_{i\in I_{l+1}}f^{l+1}_ia_{ij}$, for some $a_{ij}\in\c\G$. Any element of $\c\G[Y^l]^*\otimes_{\c\G}V_\pi$ is of the form $\sum_jf^l_j\otimes v_j$, where $v_j\in V_\pi$. In fact $\sum_jf^l_j\otimes v_j\mapsto (v_j)_j$ gives the isomorphism $\c\G[Y^l]^*\otimes_{\c\G}V_\pi\cong V_\pi^{Y^l}$. So $\|\sum_jf^l_j\otimes v_j\|^2=\sum_j\|v_j\|_\pi^2$. We have 
\begin{align*}
    &\|d^\pi_l(\sum_jf^l_j\otimes v_j)\|^2=\|(\sum_j\delta_{l+1}^*(f^l_j)\otimes v_j)\|^2=\|\sum_{i,j}f^{l+1}_ia_{ij}\otimes v_j\|^2\\
    =~&\|\sum_{i}f^{l+1}_i\otimes\sum_j\pi(a_{ij})v_j\|^2=\sum_i\|\sum_j\pi(a_{ij})v_j\|_\pi^2\le\sum_{i,j}\|\pi(a_{ij})\|^2\|v_j\|_\pi^2.
\end{align*}   
Any $a\in\c\G$ is of the form $a=\sum_{k=1}^m\lambda_k\g_k$, where $\lambda_k\in\c$ and $\g_k\in\G$. Then $\|\pi(a)\|\le\sum_{k=1}^m|\lambda_k|$, which is independent of $\pi$. Thus there exists $M^l_{ij}$ such that $\|\pi(a_{ij})\|\le M^l_{ij}$ for any $\pi$. Putting $M_l:=(\sum_{i,j}(M^l_{ij})^2)^{1/2}$, we get $\|d^\pi_l\|\le M_l$. 
\end{proof}
The following result is similar to \cite[Proposition 15]{bn15}.
\begin{cor}\label{specgap}
Let $\{\pi\}_i$ be a family of unitary representations of a group $\G$ of type $F_n$. For any $l< n$, the following are equivalent.
\begin{enumerate}
    \item[\emph{(1)}] $H^{l+1}(\G,\hat{\oplus}\pi_i)$ is reduced.
    \item[\emph{(2)}] $(\Delta^-_i)_{l+1}$, for all $i$, admit a uniform essential gap. 
    \item[\emph{(3)}]$(\Delta^+_i)_l$, for all $i$, admit a uniform essential gap.
 \end{enumerate}
\end{cor}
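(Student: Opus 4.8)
The plan is to deduce this from Proposition \ref{abspecgap} by presenting the reduced cohomology through a single Hilbert cochain complex. Since $\G$ is of type $F_n$, fix a free resolution $\{\c\G[Y^l]\}_l$ of $\c$ by $\c\G$-modules with $Y^l$ finite for every $l\le n$ (for instance the simplicial chain complex of the universal cover of a $K(\G,1)$ simplicial complex with finite $n$-skeleton), and for a unitary representation $(\pi,V_\pi)$ write $C_\pi:=\{\text{Hom}_{\c\G}(\c\G[Y^l],V_\pi),d^\pi_l\}_{l\le n}$ for the associated cochain complex truncated in degree $n$. By (\ref{top}) and finiteness of $Y^l$, each $C^l_\pi$ with $l\le n$ is a Hilbert space, and by Lemma \ref{cont} the map $d^\pi_l$ is bounded for $l<n$, so $C_\pi$ is a Hilbert cochain complex. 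Moreover, for every $l<n$ the assertion that $H^{l+1}(C_\pi)$ is reduced means precisely that $d^\pi_l$ has closed image in $C^{l+1}_\pi$, which is the same as saying that $H^{l+1}(\G,\pi)$ is reduced: this equivalence does not see the truncation, since it only involves the incoming differential $d^\pi_l\colon C^l_\pi\to C^{l+1}_\pi$, which is common to $C_\pi$ and to the full cochain complex computing $\overline{H^*}(\G,\pi)$.

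Next I would identify $C_{\hat{\oplus}_i\pi_i}$, the complex attached to the Hilbert direct sum representation $\hat{\oplus}_i\pi_i$, with the Hilbert direct sum $\hat{\oplus}_iC_{\pi_i}$. Under (\ref{top}) one has $C^l_{\hat{\oplus}_i\pi_i}\cong(\hat{\oplus}_iV_{\pi_i})^{Y^l}$; since $Y^l$ is finite, a finite power of a Hilbert direct sum is isometrically the Hilbert direct sum of the finite powers, so $C^l_{\hat{\oplus}_i\pi_i}\cong\hat{\oplus}_i(V_{\pi_i}^{Y^l})\cong\hat{\oplus}_iC^l_{\pi_i}$. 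Because $\c\G[Y^l]$ is free of finite rank, $\text{Hom}_{\c\G}(\c\G[Y^l],-)$ commutes with this Hilbert direct sum, and since $d^\pi_l$ is precomposition with a $\pi$-independent $\c\G$-linear boundary map (equivalently $d^\pi_l=\delta_{l+1}^*\otimes 1_\pi$ as in the proof of Lemma \ref{unifbddcobd}), these identifications carry $d^{\hat{\oplus}_i\pi_i}_l$ to $\oplus_i d^{\pi_i}_l$. Finally, Lemma \ref{unifbddcobd} supplies constants $M_l$ with $\|d^{\pi_i}_l\|\le M_l$ for all $i$ and all $l<n$; hence $\{C_{\pi_i}\}_i$ has uniformly bounded coboundaries, $\hat{\oplus}_iC_{\pi_i}$ is a genuine Hilbert cochain complex, and it coincides with $C_{\hat{\oplus}_i\pi_i}$.

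Granting these identifications, the corollary is Proposition \ref{abspecgap} applied to the family $\{C_{\pi_i}\}_i$ at degree $l$, for $l<n$ so that $C^l_{\pi_i}$ and $C^{l+1}_{\pi_i}$ are Hilbert spaces: statement (1) there is statement (1) here by the first paragraph, and statements (2) and (3) translate verbatim, since $(\Delta^-_i)_{l+1}=d^{\pi_i}_l(d^{\pi_i}_l)^*$ and $(\Delta^+_i)_l=(d^{\pi_i}_l)^*d^{\pi_i}_l$ involve only the Hilbert spaces $C^l_{\pi_i},C^{l+1}_{\pi_i}$ and the bounded maps between them. I do not expect a genuine obstacle; the two points that need care — and the only places where the hypotheses enter — are that $Y^l$ is finite for $l\le n$ (so the cochain groups in the relevant range are Hilbert spaces and finite powers commute with Hilbert direct sums) and that the bound in Lemma \ref{unifbddcobd} is independent of $\pi$ (so that $\hat{\oplus}_iC_{\pi_i}$ is defined), both already available.
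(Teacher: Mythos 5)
Your proposal is correct and follows essentially the same route as the paper's proof: invoke Lemma \ref{unifbddcobd} for the uniform bound on coboundaries, identify $\{\mathrm{Hom}_{\c\G}(\c\G[Y^l],\hat{\oplus}_iV_{\pi_i})\}_{l\le n}$ with the Hilbert direct sum $\hat{\oplus}_iC_{\pi_i}$ via the finite-power isomorphism $(\hat{\oplus}_iV_{\pi_i})^{Y^l}\cong\hat{\oplus}_iV_{\pi_i}^{Y^l}$, and then apply Proposition \ref{abspecgap}. Your extra remarks on the truncation and on the compatibility of the coboundary maps with the direct-sum decomposition only make explicit points the paper leaves implicit.
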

\begin{proof}
Let $\{\c\G[Y^l]\}_l$ be a free resolution of $\c$, with $Y^l$ finite for $l\le n$. By Lemma \ref{unifbddcobd}, the family $\{\text{Hom}_{\c\G}(\c\G[Y^l],V_{\pi_i})\}_{l\le n}$ of Hilbert cochain complexes has uniformly bounded coboundaries. Thus, to apply Proposition \ref{abspecgap}, all we need to show is that $\{\text{Hom}_{\c\G}(\c\G[Y^l],\hat{\oplus}V_{\pi_i})\}_{l\le n}$ is the Hilbert direct sum of this family. Fix $l\le n$ and let $|Y^l|$ be the cardinality of $Y^l$. Apropos of (\ref{top}), the claim follows from the following isomorphism.
\begin{equation*}
    (\hat{\oplus}_iV_{\pi_i})^{Y^l}\cong\bigoplus_{j=1}^{|Y^l|}\hat{\oplus}_iV_{\pi_i}=\hat{\oplus}_i\bigoplus_{j=1}^{|Y^l|}V_{\pi_i}\cong\hat{\oplus}_iV_{\pi_i}^{Y^l}.\qedhere
\end{equation*}
\end{proof}


\section{A version of Shapiro's Lemma}\label{sec-isosha}
Given a simplicial complex $Y$ we denote by $Y^l$ the set of $l$ simplices of $Y$. Recall that the \textit{simplicial chain complex of $Y$ with coefficients in $\c$} is the chain complex $\{\c[Y^l]\}_l$ of complex vector spaces $\c[Y^l]$ generated by $Y^l$ with a certain  boundary map. See \cite[\S2.1]{hatcher} for details. 
Dualizing the chain complex we get the \textit{simplicial cochain complex $\{C^l(Y,\c):=\emph{Hom}_\c(\c[Y^l],\c)\}_l$ of $Y$ with coefficients in $\c$}. Now suppose that $Y$ is a finite simplicial complex. Then we can put an inner product on each $\text{Hom}_\c(\c[Y^l],\c)$ which makes the basis dual to $Y^l$ orthonormal. Such an inner product was first considered by Eckmann \cite{eckmann45}. As in \S\ref{abshodge}, we can define the upper, lower and full Laplacian $\Delta^+_l,\Delta^-_l$ and $\Delta_l$ on $C^l(Y,\c)$. Let $d$ be denote the coboundary maps. 
\begin{definition}
The \textit{spectral gap} of upper Laplacian $\Delta^+_l$ is the minimum eigenvalue of $\Delta^+_l$ restricted to the subspace $(dC^l(Y,\c))^\perp$ orthogonal to $dC^l(Y,\c)$, with respect to the above inner product. 
\end{definition}

The rest of this section is devoted to proving the following result.

\begin{theorem}\label{isoch}
Let $\G$ and $X$ be as in Theorem \ref{showcase}. Let $\L$ be a subgroup of finite index in $\G$. Let $\{C^l(\L\b X,\c)\}_l$ be the simplicial cochain complex of $\L\b X$. Let $\{C^l(\G,L^2(\L\b\G))\}_{l\le n}$ be the truncated cochain complex for the group cohomology of $\G$ with coefficients in $L^2(\L\b\G)$, obtained from the partial resolution of $\c$ by the simplicial chain complex of $X$. We have the following chain isomorphism.
\begin{equation}\label{iso2}
    \{C^l(\L\b X,\c)\}_l\cong \{C^l(\G,L^2(\L\b\G))\}_{l\le n}.
\end{equation}
Moreover, both sides of (\ref{iso2}) have inner products defined as above and in \S\ref{abshodge}, with respect to which (\ref{iso2}) is an isometry. 
\end{theorem}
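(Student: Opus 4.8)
The plan is to realize the chain isomorphism (\ref{iso2}) as the chain-level version of Shapiro's Lemma, written out explicitly in terms of the partial free resolution of $\c$ furnished by the simplicial chain complex $\{\c[X^l]\}_{l\le n}$, and then to verify the isometry statement by a short orbit-counting computation.

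\emph{The chain isomorphism.} Since $\G$ acts freely and simplicially on $X$ with finitely many orbits of $l$-simplices for $l\le n$, choosing a lift in $X$ of each simplex of the $K(\G,1)$ complex turns $Y^l$ into a $\c\G$-basis of $\c[X^l]$; thus $\c[X^l]\cong\c\G[Y^l]$ and, by definition, $C^l(\G,L^2(\L\b\G))=\text{Hom}_{\c\G}(\c[X^l],L^2(\L\b\G))$ for $l\le n$. I would display the two natural isomorphisms
\[
\text{Hom}_{\c\G}\big(\c[X^l],L^2(\L\b\G)\big)\xrightarrow{\ \sim\ }\text{Hom}_{\c\L}\big(\c[X^l],\c\big)\xrightarrow{\ \sim\ }C^l(\L\b X,\c),
\]
where $\c$ denotes the trivial module. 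The first map is $\phi\mapsto\big(m\mapsto\phi(m)(\L e)\big)$, evaluation of the coset function $\phi(m)$ at the trivial coset, with inverse $\psi\mapsto\big(m\mapsto(\L g\mapsto\psi(gm))\big)$; the $\L$-invariance of $\psi$ and the $\G$-equivariance of $\phi$ follow at once from the definition of the quasi-regular action of $\G$ on $L^2(\L\b\G)$. For the second map, $\L$ also acts freely on $X^l$, so $\c[X^l]$ is $\c\L$-free with basis a set of lifts of $(\L\b X)^l$, and a $\c\L$-equivariant functional into the trivial module is precisely a function on $(\L\b X)^l$, i.e. a simplicial $l$-cochain.

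\emph{Compatibility with the coboundaries.} On both sides the coboundary map is induced by precomposition with the simplicial boundary operator $\partial$ of $\c[X^\bullet]$: this is the definition on the group-cohomology side, and on the quotient side $\partial$ descends along the simplicial quotient map $X\to\L\b X$ to the simplicial boundary of $\c[(\L\b X)^\bullet]$, whose dual is the simplicial coboundary. Since all three Hom-identifications above are natural in the variable $\c[X^\bullet]$, they intertwine the coboundaries, so (\ref{iso2}) is a chain isomorphism. (Cf. Lemma \ref{homtensor}, which records the same precomposition description of $d$.)

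\emph{The isometry, and where the care lies.} Fix lifts $\sigma_1,\dots,\sigma_m$ of the elements of $Y^l$. Via (\ref{top}) the inner product on $C^l(\G,L^2(\L\b\G))$ is $\|\phi\|^2=\sum_{j=1}^m\|\phi(\sigma_j)\|^2$, where $L^2(\L\b\G)$ carries counting measure, so $\|\phi\|^2=\sum_{j=1}^m\sum_{\L g\in\L\b\G}|\psi(g\sigma_j)|^2$ with $\psi$ the image of $\phi$ above. Because the $\G$-action is free with orbit representatives $\sigma_1,\dots,\sigma_m$, the assignment $(\L g,j)\mapsto\L\cdot(g\sigma_j)$ is a bijection from $(\L\b\G)\times\{1,\dots,m\}$ onto $(\L\b X)^l$, and $\psi(g\sigma_j)$ depends only on the $\L$-orbit of $g\sigma_j$, equalling the value of the corresponding cochain $f$ there; hence $\|\phi\|^2=\sum_{\tau\in(\L\b X)^l}|f(\tau)|^2$, which is the squared norm of $f$ for the inner product on $C^l(\L\b X,\c)$ making $(\L\b X)^l$ orthonormal. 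This last computation is the only step requiring genuine attention: it is exactly where the counting-measure Hilbert structure on $L^2(\L\b\G)$ appearing in (\ref{top}) gets matched with the orthonormal-cells structure on $\L\b X$, and it relies on freeness of the $\G$-action (part of the standing hypotheses of Theorem \ref{showcase}). Everything else is formal bookkeeping.
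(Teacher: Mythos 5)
Your proof is correct, but it takes a genuinely different route from the paper's. You prove the identification as a literal chain-level Shapiro's Lemma: the Frobenius adjunction $\mathrm{Hom}_{\c\G}(\c[X^l],L^2(\L\b\G))\cong\mathrm{Hom}_{\c\L}(\c[X^l],\c)$ via evaluation at the trivial coset, followed by the observation that $\L$-invariant functionals on the $\L$-free module $\c[X^l]$ are exactly simplicial cochains on $\L\b X$; naturality of both identifications in $\c[X^\bullet]$ gives compatibility with the coboundaries, and the isometry falls out of the orbit bijection $(\L\b\G)\times Y^l\leftrightarrow(\L\b X)^l$. The paper instead factors both cochain complexes through the single functor $M\mapsto\mathrm{Hom}_\c(M/IM,\c)$, where $I$ is the annihilator of the identity coset in $\c[\L\b\G]$, and gets there by a chain of three lemmas (Lemma \ref{homtensor}: $\mathrm{Hom}_{\c\G}(M,V)\cong M^*\otimes_{\c\G}V$ for $M$ finitely generated free; Lemma \ref{dual}: a double-duality swap; Lemma \ref{cyclic}: $V\otimes_{\c\G}M\cong M/IM$ for $V$ cyclic), then matches the explicit orthonormal bases $f^l_i\otimes\delta_j$ and the dual basis of $\{\L\g_j e^l_i\}$. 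Your argument is shorter and self-contained for this theorem, and it is arguably closer in spirit to the section's title; what the paper's longer route buys is the tensor description $\c\G[Y^l]^*\otimes_{\c\G}V_\pi$ with its basis $\{f^l_i\otimes\delta_j\}$, which is reused in Lemma \ref{unifbddcobd} and Lemma \ref{conjugate} for arbitrary unitary coefficients $\pi$, not just $L^2(\L\b\G)$. Both proofs share the same standing (and unstated) assumption that $\L\b X$ is again a simplicial complex whose $l$-simplices are the $\L$-orbits in $X^l$, which holds here because $\L\b X$ is an intermediate cover of the $n$-skeleton of the chosen $K(\G,1)$ complex, and both quietly fix orientations when identifying simplices with basis vectors; neither point is a gap relative to the paper.
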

\begin{proof}
We will show that the two cochain complexes are images of the truncated simplicial chain complex of $X$ under two functors. To prove chain isomorphism it is then enough to show that the two functors are naturally isomorphic.
Recall from \S\ref{abshodge}, that we may write the simplicial chain complex of $X$ as $\{\c\G[Y^l]\}_l$, where $Y$ is the finite $K(\G,1)$ complex and $Y^l$ is the set of $l$ cells in $Y$. For each $l\le n$, $\c\G[Y^l]$ is a finitely generated free $\c\G$ module. The functors will be from the category $\c\G$-\textbf{mod} of left $\c\G$ modules to the category $\textbf{Vect}_\c$ of vector spaces over $\c$. We will provide a natural transformation between the two functors, which will be a natural isomorphism when restricted to the full subcategory of finitely generated free modules.

We begin with the functor which takes the chain complex $\{\c\G[Y^l]\}_{l\le n}$ to the cochain complex $\{C^l(\L\b X,\c):=\text{Hom}_\c(\c[\L\b X^l],\c)\}_l$. The covering map $\pi:X\to\L\b X$ induces a chain homomorphism  $\pi_l:\c[X]\to\c[\L\b X^l]$, which is surjective for all $l$. Thus we take the view that the boundary map for the simplicial chain complex of $\G\b X$ is induced from that of $X$. Algebraically, this is happening since boundary maps for $\{\c\G[Y^l]\}_{l\le n}$ are morphisms in $\c\G$-\textbf{mod}.
Consider the map
\begin{align}
\begin{split}\label{ann}
    (\text{mod }\L):\c\G&\to\c[\L\b\G]\\ \sum_ic_i\g_i&\mapsto\sum_ic_i\L\g_i. 
\end{split}
\end{align}
Then $I=\{a\in\c\G:(\text{mod }\L)a=0\}$ is a right ideal in $\c\G$ and $\c[\L\b X^l]=\c[\L\b\G Y^l]\cong\c\G[Y^l]/I\c\G[Y^l]$. Thus $C^l(\L\b X,\c)$ is the image of $\c\G[Y^l]$ under the composition of functors 
\begin{align}
\begin{split}\label{funct1}
        \c\G\text{-}\textbf{mod}&\to\textbf{Vect}_\c\hspace{1cm}\text{and}\hspace{1cm}\textbf{Vect}_\c\to\textbf{Vect}_\c\\
    M&\mapsto M/IM\hspace{2.5cm}V\mapsto\text{Hom}_\c(V,\c).
\end{split}
\end{align}
If $\phi:M_1\to M_2$ is a morphism in $\c\G$-\textbf{mod} then $\phi(IM_1)=I\phi(M_1)\subset IM_2$ and hence we get a morphism $\phi_I:M_1/IM_1\to M_2/IM_2$ in $\textbf{Vect}_\c$. This describes the action on morphisms of the first functor in the composition (\ref{funct1}). 

The functor that takes the chain complex $\{\c\G[Y^l]\}_{l\le n}$ to the cochain complex $\{C^l(\G,L^2(\L\b\G))=\text{Hom}_{\c\G}(\c\G[Y^l],L^2(\L\b\G))\}_{l\le n}$ is by definition
\begin{align*}
    &\c\G\text{-}\textbf{mod}\to\textbf{Vect}_\c\\
    &M\mapsto\text{Hom}_{\c\G}(M,L^2(\L\b\G)).
\end{align*}
By Lemma \ref{homtensor} below, we may replace this functor by the functor
\begin{align*}
    &\c\G\text{-}\textbf{mod}\to\textbf{Vect}_\c\\
    &M\mapsto M^*\otimes_{\c\G}L^2(\L\b\G).
\end{align*}
Any function $f\in L^2(\L\b\G)$ can be identified with the vector space homomorphism from $\c[\L\b\G]$ to $\c$ that linearly extends $f$. Thus the right $\c\G$ module $\c[\L\b\G]$ and the left $\c\G$ module $L^2(\L\b\G)$ can be thought of as dual vector spaces. Then by Lemma \ref{dual} below, the above functor can be replaced by the functor
\begin{align*}
    &\c\G\text{-}\textbf{mod}\to\textbf{Vect}_\c\\
    &M\mapsto\text{Hom}_\c(\c[\L\b\G]\otimes_{\c\G}M,\c).
\end{align*}
This functor can be written as a composition of the two functors
\begin{align}
\begin{split}\label{funct2}
        &\c\G\text{-}\textbf{mod}\to\textbf{Vect}_\c\hspace{1cm}\text{and}\hspace{1cm}\textbf{Vect}_\c\to\textbf{Vect}_\c\\
    &M\mapsto \c[\L\b\G]\otimes_{\c\G}M\hspace{2.5cm}V\mapsto\text{Hom}_\c(V,\c).
\end{split}
\end{align}
By (\ref{ann}), $\c[\L\b\G]$ is a cyclic right $\c\G$ module with cyclic vector the identity coset, which is annihilated by $I$. Now Lemma \ref{cyclic}  below tells us that the first functors of (\ref{funct1}) and (\ref{funct2}) are naturally isomorphic. This finishes the proof of chain isomorphism.

Let $Y^l=\{e^l_i:i\in I_l\}$ and let $\{f^l_i:i\in I_l\}$ be the basis of $\c\G[Y^l]^*$ dual to the basis $Y^l$ of $\c\G[Y^l]$. Let $\{\g_j:j\in J\}$ be a set of right coset representatives of $\L$ in $\G$. Let $\delta_j\in L^2(\L\b\G)$ be the delta function at $\L\g_i$. Then the natural isomorphism is sending the orthonormal basis $\{f^l_i\otimes\delta_j:i\in I_l,j\in J\}$ of $\c\G[Y^l]^*\otimes L^2(\L\b\G)$ to the orthonormal basis of $\text{Hom}_\c(\c[\L\b\G Y^l],\c)$ dual to the basis $\{\L\g_je^l_i:i\in I_l,j\in J\}$ of $\c[\L\b\G Y^l]$. This proves the isometry.
\end{proof}

\begin{lemma}\label{homtensor} 
Let $V$ be a left $\c\G$ module. Consider the two functors
\begin{align*}
        &\c\G\emph{-\textbf{mod}}\to\emph{\textbf{Vect}}_\c\hspace{1cm}\text{and}\hspace{1cm}\c\G\emph{{-}\textbf{mod}}\to\emph{\textbf{Vect}}_\c\\
    &M\mapsto\emph{Hom}_{\c\G}(M,V)\hspace{2.5cm}M\mapsto M^*\otimes_{\c\G}V
\end{align*}
There is a natural transformation from the former to the latter, which is a natural isomorphism when restricted to the full subcategory of finitely generated free modules.
\end{lemma}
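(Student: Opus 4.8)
The plan is to exhibit the natural transformation explicitly in the direction $(-)^*\otimes_{\c\G}V \to \text{Hom}_{\c\G}(-,V)$, which is the one carrying a canonical formula, and then to observe that it becomes invertible precisely on the full subcategory of finitely generated free modules, its inverse there being the transformation asserted in the lemma. Recall that for a left $\c\G$-module $M$ the dual $M^*=\text{Hom}_{\c\G}(M,\c\G)$ is a right $\c\G$-module via $(\phi\cdot g)(m)=\phi(m)g$, so that $M^*\otimes_{\c\G}V$ is defined. Set
\[
\nu_M\colon M^*\otimes_{\c\G}V \longrightarrow \text{Hom}_{\c\G}(M,V),\qquad \nu_M(\phi\otimes v)=\bigl(m\mapsto\phi(m)\cdot v\bigr),
\]
where $\phi(m)\in\c\G$ acts on $v\in V$ through the left module structure of $V$. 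Note that when $M$ is finitely generated free, $M^*\otimes_{\c\G}V$ is already a finite direct sum of copies of $V$, so no completion is involved and the map lands in the right (for the application, Hilbert) space.

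First I would run the routine verifications. The assignment $(\phi,v)\mapsto(m\mapsto\phi(m)v)$ is $\c$-bilinear and $\c\G$-balanced, since for $g\in\c\G$ one has $(\phi\cdot g)(m)\,v=\phi(m)\,g\,v=\phi(m)(gv)$; hence it descends to a well-defined $\nu_M$ on the tensor product over $\c\G$. The map $m\mapsto\phi(m)v$ is $\c\G$-linear because $\phi$ is: $\phi(gm)v=(g\,\phi(m))v=g(\phi(m)v)$. Naturality in $M$ is immediate: for a morphism $f\colon M\to N$ both functors act by precomposition with $f$ (as $\phi\mapsto\phi\circ f$ on duals and $\psi\mapsto\psi\circ f$ on $\text{Hom}$), and $\nu_M\bigl((\phi\circ f)\otimes v\bigr)=\bigl(m\mapsto\phi(f(m))v\bigr)=\nu_N(\phi\otimes v)\circ f$, so the naturality square commutes; in particular the transferred coboundary is $\delta^*\otimes 1_V$, as used in the proof of Lemma \ref{unifbddcobd}.

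Next I would check that $\nu_M$ is bijective for $M$ finitely generated free. For $M=\c\G$, evaluation at $1$ gives an isomorphism of right $\c\G$-modules $\c\G^*\xrightarrow{\sim}\c\G$, and $\c\G\otimes_{\c\G}V\cong V$ canonically; under these identifications together with $\text{Hom}_{\c\G}(\c\G,V)\cong V$ (again evaluation at $1$), $\nu_{\c\G}$ becomes the identity of $V$, hence is an isomorphism. Both functors $(-)^*\otimes_{\c\G}V$ and $\text{Hom}_{\c\G}(-,V)$ send finite direct sums to finite direct sums — for the former because dualization satisfies $(M_1\oplus M_2)^*\cong M_1^*\oplus M_2^*$ for \emph{finite} sums — and $\nu$ is compatible with these decompositions, so $\nu_{\c\G^{\oplus k}}$ is an isomorphism for every $k$. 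Therefore $\nu$ restricts to a natural isomorphism of the two functors on the full subcategory of finitely generated free $\c\G$-modules, and $\nu^{-1}$ on that subcategory is the natural transformation claimed.

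The point to watch — more a bookkeeping matter than a genuine obstacle — is exactly this directionality: for a general, non-free $M$ there need not be any natural map $\text{Hom}_{\c\G}(M,V)\to M^*\otimes_{\c\G}V$ at all, so one must build $\nu_M$ in the opposite direction and invert it only after restricting to finitely generated free modules. Along the way one has to keep the left versus right $\c\G$-actions on $M$, $M^*$ and $V$ straight, and remember that the compatibility of $(-)^*$ with direct sums — which is what upgrades the isomorphism from $\c\G$ to all $\c\G^{\oplus k}$ — genuinely uses finiteness.
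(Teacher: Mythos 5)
Your proof is correct. The paper does not actually prove the lemma---it cites Brown, Ch.~I, Prop.~8.3(b) and stops---so your argument, checking the canonical dual-tensor-hom map $\nu_M\colon M^*\otimes_{\c\G}V\to\text{Hom}_{\c\G}(M,V)$ first on $M=\c\G$ via evaluation at $1$ and then on finite direct sums, is exactly the standard proof of the cited fact and fills the gap the paper leaves to the reference. The most useful part of your write-up is the directional remark: the canonical natural transformation runs $M^*\otimes_{\c\G}V\to\text{Hom}_{\c\G}(M,V)$, opposite to what the lemma literally asserts (``from the former to the latter''). That is a genuine imprecision in the statement, though a harmless one here, since Theorem~\ref{isoch} invokes the lemma only on the finitely generated free modules $\c\G[Y^l]$, where the map is an isomorphism and the direction is immaterial. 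One small overstatement on your side: ``there need not be any natural map $\text{Hom}_{\c\G}(M,V)\to M^*\otimes_{\c\G}V$ at all'' is slightly too strong---the zero transformation always exists, and when $V=\c\G$ both functors reduce to $M\mapsto M^*$ and the identity is a natural transformation in either direction. The accurate claim is simply that the canonical comparison map goes the other way, so one builds $\nu$ and inverts it after restricting to the full subcategory of finitely generated free modules, which is precisely what you do.
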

This is well know, see \cite[Ch. I, Proposition 8.3(b)]{brown} for example, for the natural isomorphism. We omit the proof.
\begin{lemma}\label{dual}
Let $V$ be a right $\c\G$ module. Consider the two functors
\begin{align*}
        &\c\G\emph{-\textbf{mod}}\to\emph{\textbf{Vect}}_\c\hspace{1.5cm}\text{and}\hspace{1cm}\c\G\emph{{-}\textbf{mod}}\to\emph{\textbf{Vect}}_\c\\
    &M\mapsto M^*\otimes_{\c\G}\emph{Hom}_\c(V,\c)\hspace{2cm}M\mapsto\emph{Hom}_\c(V\otimes_{\c\G}M,\c)
\end{align*}
There is a natural transformation from the former to the latter. If $V$ is finite dimensional over $\c$, it is a natural isomorphism when restricted to the full subcategory of finitely generated free modules.
\end{lemma}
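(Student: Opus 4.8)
The plan is to build the required natural transformation as a composite of two standard ones, so that the isomorphism statement on finitely generated free modules is simply inherited from Lemma \ref{homtensor}. First I would pin down the module structures. Since $M$ is a left $\c\G$-module, $M^{*}=\text{Hom}_{\c\G}(M,\c\G)$ carries a right $\c\G$-action by $(\phi a)(m)=\phi(m)a$; since $V$ is a right $\c\G$-module, for any $\c$-vector space $W$ the space $\text{Hom}_\c(V,W)$ carries a left $\c\G$-action by $(a\cdot\psi)(v)=\psi(va)$, and in particular so does $\text{Hom}_\c(V,\c)$. With these conventions $M^{*}\otimes_{\c\G}\text{Hom}_\c(V,\c)$ and $V\otimes_{\c\G}M$ are well-defined $\c$-vector spaces and the two functors in the statement make sense.

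The first ingredient is the tensor--hom adjunction: for every $\c$-vector space $W$ there is a natural isomorphism
\[ \text{Hom}_\c(V\otimes_{\c\G}M,\,W)\;\cong\;\text{Hom}_{\c\G}\bigl(M,\,\text{Hom}_\c(V,W)\bigr), \]
obtained by sending a $\c$-bilinear form $B\colon V\times M\to W$ to $m\mapsto B(-,m)$; the only thing to verify is that $\c\G$-balancedness of $B$, i.e.\ $B(va,m)=B(v,am)$, is exactly $\c\G$-linearity of $m\mapsto B(-,m)$ into $\text{Hom}_\c(V,W)$. Taking $W=\c$ gives a natural isomorphism $\text{Hom}_\c(V\otimes_{\c\G}M,\c)\cong\text{Hom}_{\c\G}(M,\text{Hom}_\c(V,\c))$. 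The second ingredient is the evaluation map $M^{*}\otimes_{\c\G}W\to\text{Hom}_{\c\G}(M,W)$, $\phi\otimes w\mapsto\bigl(m\mapsto\phi(m)\cdot w\bigr)$, which by Lemma \ref{homtensor} is a natural isomorphism when restricted to finitely generated free modules; I apply it with $W=\text{Hom}_\c(V,\c)$. Composing the two yields the desired natural transformation from $M\mapsto M^{*}\otimes_{\c\G}\text{Hom}_\c(V,\c)$ to $M\mapsto\text{Hom}_\c(V\otimes_{\c\G}M,\c)$, and since the adjunction is always an isomorphism, the composite is an isomorphism exactly where the evaluation map is, namely on finitely generated free modules. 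Unwinding it on elements, it sends $\phi\otimes f$ to the functional $v\otimes m\mapsto f\bigl(v\cdot\phi(m)\bigr)$, and from this formula one can also check directly that it is well-defined --- balanced in the variable $\phi\otimes f$, and killing the relations $vc\otimes m-v\otimes cm$ because $\phi(cm)=c\,\phi(m)$ --- and natural in $M$.

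If one prefers to avoid the adjunction, the isomorphism claim can instead be obtained by reducing to $M=\c\G$: both functors send finite direct sums to finite direct sums (the duals $(-)^{*}$ and $\text{Hom}_\c(-,\c)$ turn finite direct sums into direct sums, and $\otimes_{\c\G}$ distributes over them), so it suffices to treat $M=\c\G$, where $M^{*}\cong\c\G$ and $V\otimes_{\c\G}\c\G\cong V$ identify both sides of the map with $\text{Hom}_\c(V,\c)$ and the map itself with the identity. I do not expect a genuine obstacle; the only real care needed throughout is the bookkeeping of left versus right $\c\G$-actions on $M^{*}$, $\text{Hom}_\c(V,\c)$ and $\text{Hom}_\c(V,W)$, and the observation that for $M$ a countably infinite free module the map fails to be surjective (since $M^{*}$ becomes a product rather than a sum). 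I also note that the argument above is purely algebraic and does not use the finite-dimensionality of $V$; in the intended application that hypothesis enters only later, where it lets one identify $\text{Hom}_\c(V,\c)$ with $L^2(\L\b\G)$ for $V=\c[\L\b\G]$, as in Theorem \ref{isoch}.
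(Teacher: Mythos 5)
Your proof is correct, and it takes a genuinely different route from the paper's. The paper defines the map directly from the bilinear pairing $\langle f\otimes\psi,\,w\otimes m\rangle=\psi(f(m)w)$, verifies naturality by an explicit computation with dual morphisms, and then proves the isomorphism statement by choosing bases and observing that the pairing is non-degenerate in both coordinates --- an argument that genuinely uses $\dim_\c V<\infty$, since for infinite-dimensional spaces non-degeneracy of a pairing does not yield an isomorphism onto the dual. You instead factor the map as the evaluation transformation $M^*\otimes_{\c\G}W\to\mathrm{Hom}_{\c\G}(M,W)$ (with $W=\mathrm{Hom}_\c(V,\c)$, i.e.\ Lemma \ref{homtensor}) followed by the tensor--hom adjunction $\mathrm{Hom}_{\c\G}(M,\mathrm{Hom}_\c(V,\c))\cong\mathrm{Hom}_\c(V\otimes_{\c\G}M,\c)$; unwinding this composite recovers exactly the paper's formula $\phi\otimes f\mapsto\bigl(v\otimes m\mapsto f(v\cdot\phi(m))\bigr)$, so the two constructions agree. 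What your approach buys is that naturality and the isomorphism on finitely generated free modules come for free from the two standard ingredients, and --- as you correctly note --- the finite-dimensionality of $V$ is then seen to be unnecessary for the isomorphism (your reduction to $M=\c\G$, where both sides become $\mathrm{Hom}_\c(V,\c)$ and the map is the identity, confirms this); that hypothesis is only needed later to identify $\mathrm{Hom}_\c(\c[\L\b\G],\c)$ with $L^2(\L\b\G)$. Your bookkeeping of the left and right $\c\G$-actions is the correct one and matches the paper's conventions, so there is no gap.
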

\begin{proof}
We have a $\c$ bilinear map
\begin{align}\label{pairing}
\begin{split}
  \langle ~,~\rangle:(M^*\otimes_{\c\G}\text{Hom}_\c(V,\c))&\times (V\otimes_{\c\G} M)\to \c,\\
  \langle f\otimes\psi,w\otimes m\rangle&=\psi(f(m)w),
\end{split}
\end{align}
which induces a $\c$ linear map $M^*\otimes_{\c\G}\text{Hom}_\c(V,\c)\to\text{Hom}_\c(V\otimes_{\c\G}M,\c), x\mapsto\langle x,~\rangle$. 
We will show that this map is a natural transformation. Let $\phi:M_1\to M_2$ be a morphism in $\c\G$-\textbf{mod}. Let $\phi^*:M_2^*\to M_1^*$ be the dual $\c\G$ morphism. Then the image of $\phi$ under the first functor is $\phi^*\otimes 1$. The image of $\phi$ under the second functor is the dual of $1\otimes\phi:V\otimes_{\c\G}M_1\to V\otimes_{\c\G}M_2$, which we denote by $(1\otimes\phi)^\#$. we have to show that the following diagram commutes:
\begin{center}
	\begin{tikzcd}
		M_2^*\otimes_{\c\G}\text{Hom}_\c(V,\c)\ar{r}\ar{d}[swap]{\phi^*\otimes 1} &\text{Hom}_\c(V\otimes_{\c\G}M_2,\c) \ar{d}{(1\otimes\phi)^\#}\\
		M_1^*\otimes_{\c\G}\text{Hom}_\c(V,\c)\ar{r} & \text{Hom}_\c(V\otimes_{\c\G}M_1,\c).
	\end{tikzcd}
\end{center}
We conclude this from the following calculation, where $f\in M_2^*,\psi\in\text{Hom}_\c(V,\c),w\in V$ and $m\in M_1$.
\begin{equation*}
    \langle f\otimes\psi,w\otimes\phi(m)\rangle_2=\psi(f(\phi(m))w)=\psi((\phi^*f)(m)w)=\langle\phi^*f\otimes\psi,w\otimes m\rangle_1.
\end{equation*}
Finally, we assume that $M$ is a finitely generated free $\c\G$ module and $V$ is  finite dimensional over $\c$. Then choosing  bases of $M$ and $V$ (over $\c\G$ and $\c$, respectively) and dual bases of $M^*$ and $\text{Hom}_\c(V,\c)$, we note that the pairing (\ref{pairing}) is non-degenerate in both coordinates and hence the natural transformation we defined is a natural isomorphism in this case. 
\end{proof}
\begin{lemma}\label{cyclic}
Let $V$ be a cyclic right $\c\G$ module with cyclic vector $v$. Let $I$ be the right ideal annihilating $v$.  The two functors below are naturally isomorphic.
\begin{align*}
        &\c\G\emph{-\textbf{mod}}\to\emph{\textbf{Vect}}_\c\hspace{1cm}\text{and}\hspace{1cm}\c\G\emph{{-}\textbf{mod}}\to\emph{\textbf{Vect}}_\c\\
    &M\mapsto V\otimes_{\c\G}M\hspace{3.2cm}M\mapsto M/IM
\end{align*}
\end{lemma}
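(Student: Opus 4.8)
The plan is to write down the natural transformation explicitly and then check it is an isomorphism. Since $V$ is cyclic with cyclic vector $v$, the map $\c\G\to V$, $a\mapsto va$, is a surjection of right $\c\G$ modules with kernel $I$, so $V\cong \c\G/I$ as right modules. Under this identification, for any left module $M$ we have $V\otimes_{\c\G}M\cong(\c\G/I)\otimes_{\c\G}M$. Now I would use right-exactness of the tensor product: applying $-\otimes_{\c\G}M$ to the exact sequence $I\hookrightarrow\c\G\twoheadrightarrow\c\G/I$ gives an exact sequence $I\otimes_{\c\G}M\to M\to(\c\G/I)\otimes_{\c\G}M\to 0$, and the image of the first map is exactly $IM$ (since the composite $I\otimes M\to\c\G\otimes M\cong M$ has image $IM$). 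Hence there is a natural isomorphism $(\c\G/I)\otimes_{\c\G}M\cong M/IM$. Composing, we get a natural isomorphism $\eta_M:V\otimes_{\c\G}M\to M/IM$; concretely it sends $va\otimes m$ to the class of $am$ in $M/IM$, which is well-defined precisely because $Ia$ acting on $m$ lands in $IM$.

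The key steps, in order, are: (1) fix the isomorphism $V\cong\c\G/I$ of right $\c\G$ modules coming from the cyclic vector; (2) define $\eta_M$ on generators by $va\otimes m\mapsto \overline{am}$ and check it is well-defined (the only thing to verify is that if $va=va'$, i.e. $a-a'\in I$, then $am-a'm\in IM$, which is immediate), $\c$-linear, and a two-sided inverse to the obvious map $M/IM\to V\otimes_{\c\G}M$, $\overline{m}\mapsto v\otimes m$; (3) verify naturality, i.e. that for $\phi:M_1\to M_2$ the square relating $\eta_{M_1},\eta_{M_2}$, $1_V\otimes\phi$ and $\phi_I$ commutes — this is a one-line check on generators, $\eta_{M_2}((1_V\otimes\phi)(va\otimes m))=\overline{a\phi(m)}=\overline{\phi(am)}=\phi_I(\overline{am})=\phi_I(\eta_{M_1}(va\otimes m))$.

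Since the natural transformation is defined and checked to be an isomorphism for \emph{every} module $M$, no restriction to finitely generated free modules is needed here, in contrast to Lemmas \ref{homtensor} and \ref{dual}. There is no real obstacle: the argument is just the standard identification $(\c\G/I)\otimes_{\c\G}M\cong M/IM$ made natural. The only point requiring a small amount of care is the well-definedness in step (2) — one must use that $I$ is a \emph{right} ideal so that $Ia\subseteq I$ is not needed, rather one uses $a-a'\in I\Rightarrow (a-a')m\in IM$ directly — but this is routine and I would dispatch it in a sentence.
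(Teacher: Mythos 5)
Your proof is correct and is essentially the paper's own argument: the same map $va\otimes m\mapsto \overline{am}$, the same well-definedness check via $a-a'\in I\Rightarrow (a-a')m\in IM$, the same explicit inverse $\overline{m}\mapsto v\otimes m$, and the same naturality square. The appeal to right-exactness of $-\otimes_{\c\G}M$ is a valid (if redundant, given that you also verify the two-sided inverse directly) extra justification, and your observation that no finiteness hypothesis is needed here matches the statement of the lemma.
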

\begin{proof}

Let $\pi:M\to M/IM$ be the quotient map. The map
\begin{align*}
    V\times M\to M/IM,\\
    (va,m)\to\pi(am),
\end{align*}
is well defined since if we replace $va$ by $v(a+b)$, where $b\in I$, then $\pi((a+b)m)=\pi(am)+\pi(bm)=\pi(am)$. It is also $\c\G$ bilinear and hence induces a map $V\otimes_{\c\G}M\to M/IM$. We will show that this map is a natural transformation. Let $\phi:M_1\to M_2$ be a morphism in $\c\G$-\textbf{mod}. The image $\phi_I$ of $\phi$ under the first functor is defined by the commutativity of the following diagram:
\begin{center}
	\begin{tikzcd}
		M_1\ar{r}{\phi}\ar{d}[swap]{\pi_1} & M_2\ar{d}{\pi_2}\\
		M_1/IM_1\ar{r}[swap]{\phi_I} & M_2/IM_2.
	\end{tikzcd}
\end{center}
We have to show the following diagram commutes:
\begin{center}
	\begin{tikzcd}
		V\otimes_{\c\G}M_1\ar{r}\ar{d}[swap]{1\otimes\phi} & M_1/IM_1\ar{d}{\phi_I}\\
		V\otimes_{\c\G}M_2\ar{r} & M_2/IM.
	\end{tikzcd}
\end{center}
The composition of the top horizontal and right vertical maps is given by 
\begin{equation*}
    va\otimes m\mapsto\pi_1(am)\mapsto\phi_I(\pi_1(am))=\pi_2(\phi(am)),
\end{equation*}
while that of the left vertical and bottom horizontal map is given by
\begin{equation*}
    va\otimes m\mapsto va\otimes\phi(m)\mapsto \pi_2(a\phi(m))=\pi_2(\phi(am)).
\end{equation*}
To show that this natural transformation is in fact a natural isomorphism, we produce an inverse to the map $V\otimes_{\c\G}M\to M/IM$. Consider the map $M\to V\otimes_{\c\G}M, m\mapsto v\otimes m$. If $m\in M$ and $a\in I$ then $am\mapsto v\otimes am=va\otimes m=0$. Thus we get a map $M/IM\to V\otimes_{\c\G}M, \pi(m)\mapsto v\otimes m$. We check that the composition $V\otimes_{\c\G}M\to M/IM\to V\otimes_{\c\G}M$ is identity:
\begin{equation*}
    va\otimes m\mapsto\pi(am)\mapsto v\otimes am=va\otimes m,
\end{equation*}
and the composition $M/IM\to V\otimes_{\c\G}M\to M/IM$ is identity: 
\begin{equation*}
    \pi(m)\mapsto v\otimes m\mapsto\pi(m).\qedhere
\end{equation*}
\end{proof}

\section{Proofs of Theorems \ref{showcase} and \ref{iso}}\label{sec-proofs}
Theorem \ref{showcase} follows immediate from Corrollary \ref{genproptau} below. 
\begin{theorem}\label{main}
Let $\G, X$ and $\{\L_i\}_i$ be as in Theorem \ref{showcase}. Let $\lambda_i^+$ and $\lambda_i^-$ be the first non-zero eigenvalues of the upper Laplacian $\Delta_i^+$ and the lower Laplacian $\Delta_i^-$, respectively, acting on $C^{n-1}(\L_i\b X,\c)$. 
\begin{enumerate}
    \item[\emph{(1)}] There exists $\epsilon>0$ such that $\lambda^+_i>\epsilon$, for all $i$, if and only if  $H^n(\G,\hat{\oplus}_iL^2(\L_i\b\G))$ is reduced.
    \item[\emph{(2)}] There exists $\epsilon>0$ such that $\lambda^-_i>\epsilon$, for all $i$, if and only if $H^{n-1}(\G,\hat{\oplus}_iL^2(\L_i\b\G))$ is reduced. 
\end{enumerate}
In addition, $\overline{H^{n-1}}(\G,\hat{\oplus}_iL^2(\L_i\b\G))=0$ iff $H^{n-1}(\L_i\b X,\c)=0$, for all $i$. 
\end{theorem}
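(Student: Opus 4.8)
The plan is to deduce all three statements from the isometric identification of cochain complexes in Theorem~\ref{isoch}, combined with Corollary~\ref{specgap} applied to the family of quasi-regular representations $\pi_i:=L^2(\L_i\b\G)$. By Theorem~\ref{isoch} there is, for each $i$, an isometric isomorphism of Hilbert cochain complexes $\{C^l(\L_i\b X,\c)\}_l\cong\{C^l(\G,L^2(\L_i\b\G))\}_{l\le n}$, the right-hand side built from the simplicial resolution coming from $X$; since an isometric chain isomorphism intertwines adjoints, it carries $\Delta_i^+$ and $\Delta_i^-$ on $C^{n-1}(\L_i\b X,\c)$ to the Laplacians $(\Delta^+_i)_{n-1}$ and $(\Delta^-_i)_{n-1}$ of the group cochain complex. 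Moreover, exactly as in the proof of Corollary~\ref{specgap} (via $(\hat{\oplus}_iV_{\pi_i})^{Y^l}\cong\hat{\oplus}_iV_{\pi_i}^{Y^l}$), the cochain complex computing $\overline{H^*}(\G,\hat{\oplus}_iL^2(\L_i\b\G))$ is the Hilbert direct sum $\hat{\oplus}_i\{C^l(\L_i\b X,\c)\}_l$, which has uniformly bounded coboundaries by Lemma~\ref{unifbddcobd}.

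For statement~(1) I apply Corollary~\ref{specgap} with $l=n-1<n$: the operators $(\Delta^+_i)_{n-1}$ admit a uniform essential gap if and only if $H^n(\G,\hat{\oplus}_iL^2(\L_i\b\G))$ is reduced. Under the identification above, $(\Delta^+_i)_{n-1}$ is $\Delta_i^+$ acting on the \emph{finite-dimensional} space $C^{n-1}(\L_i\b X,\c)$, whose spectrum is a finite set of eigenvalues; so ``the $\Delta_i^+$ admit an essential gap with constant $\epsilon$'' says precisely that every non-zero eigenvalue of every $\Delta_i^+$ is at least $\epsilon$, i.e.\ $\lambda_i^+\ge\epsilon$ for all $i$, and replacing $\epsilon$ by $\epsilon/2$ turns $\ge$ into $>$. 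Statement~(2) is the identical argument with $l=n-2$, using $(\Delta^-_i)_{n-1}$ and $H^{n-1}$; one need only remark on the degenerate case $n=1$, where $\Delta_i^-$ vanishes on $C^0$ and $H^0$ is automatically reduced, so both sides hold vacuously.

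For the final assertion, set $C_i:=\{C^l(\L_i\b X,\c)\}_l$, so that $\overline{H^{n-1}}(\G,\hat{\oplus}_iL^2(\L_i\b\G))\cong\overline{H^{n-1}}(\hat{\oplus}_iC_i)$ by the first paragraph. By the Hodge decomposition of \S\ref{abshodge}, this reduced cohomology is the space $(\hat{\oplus}_iC_i)^{n-1}_0=\ker\big(\oplus_i(d_i)_{n-1}\big)\cap\ker\big(\oplus_i(\partial_i)_{n-1}\big)$. Since the kernel of a direct sum of uniformly bounded operators is the Hilbert direct sum of the kernels, this equals $\hat{\oplus}_i\big(\ker(d_i)_{n-1}\cap\ker(\partial_i)_{n-1}\big)=\hat{\oplus}_i(C_i)^{n-1}_0\cong\hat{\oplus}_i\overline{H^{n-1}}(C_i)$; and because each $C_i$ is a finite-dimensional complex its coboundaries are closed, so $\overline{H^{n-1}}(C_i)=H^{n-1}(\L_i\b X,\c)$. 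Hence $\overline{H^{n-1}}(\G,\hat{\oplus}_iL^2(\L_i\b\G))\cong\hat{\oplus}_iH^{n-1}(\L_i\b X,\c)$, which vanishes if and only if each $H^{n-1}(\L_i\b X,\c)$ vanishes.

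The main obstacle is not conceptual, since the substance is already packaged in Theorem~\ref{isoch} and Corollary~\ref{specgap}; the care lies in the bookkeeping --- checking that the Hilbert-direct-sum construction commutes with $\text{Hom}_{\c\G}(\c\G[Y^l],-)$ so that $\hat{\oplus}_iC_i$ genuinely computes $\overline{H^*}(\G,\hat{\oplus}_iL^2(\L_i\b\G))$, that the upper and lower Laplacians match across the isometry of Theorem~\ref{isoch}, that finite-dimensionality is exactly what converts ``uniform essential gap'' into ``uniform lower bound on the first non-zero eigenvalue'', and in attending to the low-dimensional edge cases.
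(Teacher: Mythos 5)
Your proof is correct. Parts (1) and (2) follow the paper's own route essentially verbatim: transport the Laplacians through the isometry of Theorem~\ref{isoch}, identify the truncated cochain complex of $\G$ with coefficients in $\hat{\oplus}_iL^2(\L_i\b\G)$ with the Hilbert direct sum $\hat{\oplus}_iC^\bullet(\L_i\b X,\c)$, apply Corollary~\ref{specgap} in degrees $l=n-1$ and $l=n-2$, and use finite-dimensionality to convert ``uniform essential gap'' into a uniform lower bound on the first non-zero eigenvalue. Where you genuinely diverge is the additional statement: the paper argues spectrally, invoking Theorem~\ref{unionclosure} and the uniform essential gap supplied by the hypothesis of (1) to relate $0\notin\sigma(\Delta^+)$ to vanishing of the reduced cohomology, whereas you compute $\overline{H^{n-1}}(\hat{\oplus}_iC_i)\cong(\hat{\oplus}_iC_i)^{n-1}_0\cong\hat{\oplus}_i(C_i)^{n-1}_0\cong\hat{\oplus}_iH^{n-1}(\L_i\b X,\c)$ directly from the Hodge decomposition of \S\ref{abshodge} and the fact that kernels commute with Hilbert direct sums of uniformly bounded operators. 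Your route is cleaner and buys something real: it proves the ``in addition'' equivalence unconditionally, exactly as it is stated in the theorem, rather than under the standing assumption of a uniform gap, and it sidesteps the delicate point that $\ker\Delta^+_{n-1}$ is all of $\ker d_{n-1}$ (not just the harmonic part), which makes the paper's assertion that $0\notin\sigma(\Delta^+)$ is equivalent to $\overline{H^{n-1}}=0$ require extra care. Your remark on the degenerate case $n=1$ for part (2) is also a worthwhile addition.
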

\begin{cor}\label{genproptau}
Let $\G, X$ and $\L_i$ be as in Theorem \ref{showcase}. Then $\{\L_i\b X\}$ is a family of spectral expanders if and only if $H^i(\G,\hat{\oplus}_iL^2(\L_i\b\G))=0$, for all $1\le i\le n-1$, and $H^n(\G,\hat{\oplus}_iL^2(\L_i\b\G))$ is reduced.
\end{cor}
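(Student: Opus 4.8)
The plan is to unwind the definition of a bounded-degree $n$-dimensional spectral expander into separate assertions about the complexes $\L_i\b X$ — a uniform bound on vertex degrees, the number of vertices tending to infinity, expansion of the $1$-skeleta, and a uniform positive lower bound on the spectral gaps of the upper Laplacians $\Delta^+_l$ in the intermediate degrees — and to match each of these against a piece of the cohomological condition, using the chain isometry of Theorem \ref{isoch} together with Corollary \ref{specgap} (and, for the top degree, Theorem \ref{main}). I would first dispose of the two soft conditions. Since $Y$ is a $K(\G,1)$ complex, $\G$, and hence each $\L_i$, acts freely on the universal cover, so $\L_i\b X\to\G\b X$ is a covering map onto the finite complex $\G\b X$; being a local isomorphism it shows that every vertex of $\L_i\b X$ lies in at most $\max_{v}\deg v$ cells of $\G\b X$, a bound independent of $i$. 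The vertex count of $\L_i\b X$ is $[\G:\L_i]\cdot|(\G\b X)^{0}|$, which goes to infinity precisely when $[\G:\L_i]\to\infty$; as $\G$ is finitely generated it has only finitely many subgroups of each index, so this holds automatically once the family is infinite (and is vacuous otherwise). Neither condition interacts with the spectral content, so the corollary reduces to the remaining equivalence.

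For that, by Theorem \ref{isoch} the simplicial cochain complex $\{C^l(\L_i\b X,\c)\}_l$ is isometrically isomorphic, in degrees $l\le n$, to $\{C^l(\G,L^2(\L_i\b\G))\}_{l\le n}$, so the Laplacians $\Delta^{\pm}_l$ on the two sides coincide; and, just as in the proof of Corollary \ref{specgap} (using that each $Y^l$ is finite), taking Hilbert direct sums identifies $\{C^l(\G,\hat\oplus_i L^2(\L_i\b\G))\}_{l\le n}$ with $\hat\oplus_i\{C^l(\L_i\b X,\c)\}$. Now fix a degree $l$. On the finite-dimensional space $C^l(\L_i\b X,\c)$, the spectral gap of $\Delta^+_l$ — the least eigenvalue of $\Delta^+_l$ on the orthocomplement of the coboundaries — is strictly positive iff there is no nonzero harmonic $l$-cochain, i.e. $H^l(\L_i\b X,\c)=0$, in which case it equals the least eigenvalue of $\bar\Delta^+_l$ on $C^l_+$, that is, the essential gap of $\Delta^+_l$. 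Hence a uniform positive lower bound on the gaps of $\Delta^+_l$ over the family is equivalent to the conjunction of (a) $H^l(\L_i\b X,\c)=0$ for all $i$, and (b) the $(\Delta^+_i)_l$ admitting a uniform essential gap. Under the identification of cochain complexes, (a) says that $\overline{H^l}(\G,\hat\oplus_i L^2(\L_i\b\G))\cong\hat\oplus_i H^l(\L_i\b X,\c)$ (these finite-dimensional complexes having equal reduced and ordinary cohomology) vanishes; and, by Corollary \ref{specgap}, (b) says that $H^{l+1}(\G,\hat\oplus_i L^2(\L_i\b\G))$ is reduced. The same translation in degree $0$ reads: the $1$-skeleta form an expander family iff $(\Delta^+_i)_0$ admits a uniform essential gap iff $H^1(\G,\hat\oplus_i L^2(\L_i\b\G))$ is reduced (here $H^0$ is always reduced while $\overline{H^0}\ne0$, so degree $0$ contributes only through (b)).

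It then remains to collect these equivalences over the degrees named in the definition — expansion of the $1$-skeleta, and a uniform gap for $\Delta^+_l$ in each intermediate degree $1\le l\le n-1$. The $1$-skeleton condition gives "$H^1(\G,\hat\oplus_i L^2(\L_i\b\G))$ reduced"; a uniform gap for $\Delta^+_l$, $1\le l\le n-1$, gives simultaneously "$\overline{H^l}(\G,\hat\oplus_i L^2(\L_i\b\G))=0$" and "$H^{l+1}(\G,\hat\oplus_i L^2(\L_i\b\G))$ reduced". Altogether: $H^l$ reduced for $1\le l\le n$ and $\overline{H^l}=0$ for $1\le l\le n-1$; and since "$H^l$ reduced with $\overline{H^l}=0$" is exactly "$H^l=0$", the conjunction is precisely "$H^l(\G,\hat\oplus_i L^2(\L_i\b\G))=0$ for $1\le l\le n-1$ and $H^n(\G,\hat\oplus_i L^2(\L_i\b\G))$ reduced", which is the statement of the corollary. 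In the top degree $l=n-1$ the two equivalences are exactly Theorem \ref{main}(1) and its "in addition" clause; the lower-degree instances are handled uniformly by Theorem \ref{isoch} and Corollary \ref{specgap}.

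The step I expect to require the most care — the main obstacle — is precisely this last bit of bookkeeping: each upper-Laplacian spectral gap in degree $l$ secretly encodes two different statements, the vanishing of $H^l(\L_i\b X,\c)$ and the reducedness of $H^{l+1}$ of the big twisted module, so one must check that letting $l$ range over the correct set of intermediate degrees — together with the $1$-skeleton input, and without ever demanding the (false) vanishing $\overline{H^0}=0$ or the (unwarranted) vanishing $\overline{H^n}=0$ — reassembles exactly the asymmetric list "$H^1=\dots=H^{n-1}=0$ and $H^n$ reduced" in the statement.
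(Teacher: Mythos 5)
Your proof is correct and follows essentially the same route as the paper: translate each degree-$l$ spectral gap via the isometry of Theorem \ref{isoch} and Corollary \ref{specgap} into ``$\overline{H^l}(\G,\hat\oplus_i L^2(\L_i\b\G))=0$ and $H^{l+1}(\G,\hat\oplus_i L^2(\L_i\b\G))$ reduced,'' using that the spectral gap equals the essential gap exactly when $H^l(\L_i\b X,\c)=0$. Your version is in fact more explicit than the paper's one-line deduction from Theorem \ref{main}, since you carry out the bookkeeping in every intermediate degree rather than only in degrees $n-1$ and $n$.
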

Corollary \ref{genproptau} follows immediately from Theorem \ref{main} by noting that the spectral gap and the essential gap of $\Delta^+_i$ are the same if and only if $H^i(\L_i\b X,\c)=0$.

\vspace{.1cm}

\noindent\textit{Proof of Theorem \ref{main}}. By Theorem \ref{isoch}, we may assume that $\Delta^+_i$ and $\Delta^-_i$ are the upper and lower Laplacians of $C^{n-1}(\G,L^2(\L_i\b\G))$, with $\lambda_i^+$ and $\lambda_i^-$ being their first non-zero eigenvalues, respectively.  Since $C^{n-1}(\G,L^2(\L_i\b\G))$ are finite dimensional, the spectrums of $\Delta^+_i$ and $\Delta^-_i$ are their sets of eigenvalues.  By Corollary \ref{specgap}, $H^n(\G,\hat{\oplus}_iL^2(\L_i\b\G))$ is reduced if and only if $\Delta_i^+$, for all $i$, admit a uniform essential gap, that is, there exists $\epsilon>0$ such that $\lambda^+_i>\epsilon$ for all $i$. This proves (1). Again by Corollary \ref{specgap}, $H^{n-1}(\G,\hat{\oplus}_iL^2(\L_i\b\G))$ is reduced if and only if $\Delta_i^-$, for all $i$, admit a uniform essential gap, that is, there exists $\epsilon>0$ such that $\lambda^-_i>\epsilon$ for all $i$. This proves (2). We will prove the additional statement assuming (1). The proof for that assuming (2) is similar. Let $\Delta^+$ denote the upper Laplacians of $C^n(\G,\hat{\oplus}_iL^2(\L_i\b\G))$.  Theorem \ref{unionclosure} says
\begin{equation*}
    \sigma(\Delta^+)=\overline{\cup_i\sigma(\Delta^+_i)}.
\end{equation*}
By (1) there is a uniform essential gap for $\Delta_i^+$, for all $i$. This is equivalent to $\Delta^+$ having a essential gap. In this situation $0\notin\sigma(\Delta^+)$ is equivalent to
$\overline{H^{n-1}}(\G,\hat{\oplus}_iL^2(\L_i\b\G))$ $=0$. It is also equivalent to $0\notin\sigma(\Delta^+_i)$, for all $i$, which in turn is equivalent to $H^{n-1}(\L_i\b X,\c)$ $\cong H^{n-1}(\G,L^2(\L_i\b\G))=0$, for all $i$.\qed

\vspace{.1cm}

Now we turn to the proof of Theorem \ref{iso}. Let $\G$ be of type $F_{n+1}$ and let $\{\c\G[Y]^l,\delta_l\}_{l\le n+1}$ be a partial resolution of $\c$ by finitely generated $\c\G$ modules. We begin with an explicit description of the Laplacian $\Delta_\pi$ on the (truncated) cochain complex $\{\c\G[Y^l]^*\otimes_{\c\G}V_\pi,\delta_{l+1}^*\otimes 1\}_{l\le{n+1}}$ for the group cohomology of $\G$  with coefficients in a unitary representation $(\pi,V_\pi)$.
\begin{lemma}\label{conjugate}
For all $l\le n+1$, let $\{f_j^l:j\in I_l\}$ be the basis of $\c\G[Y^l]^*$ dual to the basis $Y^l$ of $\c\G[Y^l]$. Let $A_l$ be the matrix of $\delta^*_l$ with respect to these bases. Then $(\Delta_\pi)_l$ is of the form $\Phi_l\otimes 1$, where the matrix of $\Phi_l$ with respect to the above basis is
\begin{equation*}
    D_l:=\iota(A_{l+1})^\top A_{l+1}+A_l\iota(A_l)^\top,
\end{equation*}
where $\iota$ is the anti-involution on $\c\G$ given by $\iota(\sum_iz_i\g_i)=\sum_i\overline{z}_i\g_i^{-1}$.
\end{lemma}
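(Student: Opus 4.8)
The plan is to compute the coboundary $d_l^\pi=\delta_{l+1}^*\otimes 1$, its Hilbert-space adjoint $\partial_{l+1}^\pi$, and hence the Laplacian $(\Delta_\pi)_l$, entirely in terms of matrices over $\c\G$. Throughout I use the identification $\c\G[Y^l]^*\otimes_{\c\G}V_\pi\cong V_\pi^{Y^l}$ from \S\ref{abshodge}, under which $\sum_j f_j^l\otimes v_j\leftrightarrow (v_j)_j$, together with the product inner product $\langle(v_j)_j,(w_j)_j\rangle=\sum_j\langle v_j,w_j\rangle_\pi$. For a matrix $M=(M_{ij})$ over $\c\G$ of the appropriate shape, write $\pi(M)$ for the block operator $(v_j)_j\mapsto\big(\sum_j\pi(M_{ij})v_j\big)_i$ between the relevant spaces $V_\pi^{Y^\bullet}$. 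Since $\pi$ is an algebra representation, $\pi(M)\pi(N)=\pi(MN)$ whenever the matrix product $MN$ over $\c\G$ is defined; this is the only mechanism used to simplify compositions.

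First I would record that $d_l^\pi$ has matrix $\pi(A_{l+1})$: by definition $\delta_{l+1}^*(f_j^l)=\sum_i f_i^{l+1}(A_{l+1})_{ij}$, so $d_l^\pi(\sum_j f_j^l\otimes v_j)=\sum_i f_i^{l+1}\otimes\sum_j\pi((A_{l+1})_{ij})v_j$, using $(fa)\otimes v=f\otimes\pi(a)v$ in the tensor product over $\c\G$. The key computational step is the adjoint: $\partial_{l+1}^\pi$ has matrix $\pi(\iota(A_{l+1})^\top)$. To see this, pair $d_l^\pi u$ with $w$, move each scalar $\pi(\cdot)$ across the inner product as its Hilbert adjoint, and use unitarity of $\pi$ in the form $\pi(a)^*=\pi(\iota(a))$ for $a\in\c\G$ — this is exactly where the anti-involution enters, since for $a=\sum_k z_k\gamma_k$ one has $\pi(a)^*=\sum_k\overline{z_k}\pi(\gamma_k)^{-1}=\pi\big(\sum_k\overline{z_k}\gamma_k^{-1}\big)$; the swap of the two summation indices in the pairing accounts for the transpose. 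Identically, $\partial_l^\pi$ has matrix $\pi(\iota(A_l)^\top)$.

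The rest is bookkeeping: $\partial_{l+1}^\pi d_l^\pi$ has matrix $\pi(\iota(A_{l+1})^\top)\pi(A_{l+1})=\pi(\iota(A_{l+1})^\top A_{l+1})$, and $d_{l-1}^\pi\partial_l^\pi$ has matrix $\pi(A_l)\pi(\iota(A_l)^\top)=\pi(A_l\iota(A_l)^\top)$; adding, $(\Delta_\pi)_l$ has matrix $\pi(D_l)$ with $D_l$ as in the statement. Finally, $D_l$ is a square matrix over $\c\G$ and so defines a right $\c\G$-module endomorphism $\Phi_l$ of $\c\G[Y^l]^*$ with matrix $D_l$, \emph{independent of $\pi$}; tracing through the block-operator description once more shows $\Phi_l\otimes 1_{V_\pi}$ has matrix $\pi(D_l)$, whence $(\Delta_\pi)_l=\Phi_l\otimes 1$.

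I expect the only real difficulty to be bookkeeping discipline: keeping straight that $\c\G[Y^l]^*$ is a \emph{right} $\c\G$-module, so that composition of the maps in question corresponds to matrix multiplication over $\c\G$ in the order written; that the transpose in $\iota(A_{l+1})^\top$ comes from relabeling indices in the adjoint pairing and from nothing else; and that unitarity of $\pi$ forces the anti-involution $\iota$, not a plain complex conjugate. None of these points is deep, but each must be handled consistently for the stated formula for $D_l$ to come out.
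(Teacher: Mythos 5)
Your proposal is correct and follows essentially the same route as the paper: both identify the cochain spaces with $V_\pi^{Y^l}$, verify by the inner-product pairing and unitarity ($\pi(a)^*=\pi(\iota(a))$) that the adjoint of $\delta_{l+1}^*\otimes 1$ is given by the matrix $\iota(A_{l+1})^\top$, and then obtain $D_l$ by composing matrices over $\c\G$. Your explicit $\pi(M)\pi(N)=\pi(MN)$ bookkeeping is just a slightly more systematic packaging of the paper's computation.
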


\begin{proof}
It is enough to show that the adjoint $\partial_{l+1}$ to the coboundary map $d_l=\delta^*_{l+1}\otimes 1$ is of the form $\phi_{l+1}\otimes 1$, where the matrix of $\phi_{l+1}$ is $\iota(A_l)^\top$. Let $\sum_kf^l_k\otimes w_k\in\c\G[Y^l]^*\otimes V_\pi$ and $\sum_if^{l-1}_i\otimes v_i\in\c\G[Y^{l-1}]\otimes V_\pi$. We have to show
\begin{equation*}
    \langle\sum_{i,j}f^l_ja_{ij}\otimes v_i,\sum_kf^l_k\otimes w_k\rangle=\langle\sum_if^{l-1}_i\otimes v_i,\sum_{k,j}f^{l-1}_j\iota(a_{jk})\otimes w_k\rangle.
\end{equation*}
The LHS simplifies as,
\begin{align*}
    &\langle\sum_{i,j}f^l_j\otimes\rho(a_{ij})v_i,\sum_kf^l_k\otimes w_k\rangle=\sum_{i,j}\langle f^l_j\otimes\rho(a_{ij})v_i,\sum_kf^l_k\otimes w_k\rangle\\
    =&\sum_{i,j}\langle f^l_i\otimes\rho(a_{ij})v_i,f^l_j\otimes w_j\rangle=\sum_{i,j}\langle\rho(a_{ij})v_i,w_j\rangle_\rho,
\end{align*}
while the RHS simplifies as
\begin{align*}
    &\langle\sum_if^{l-1}_i\otimes v_i,\sum_{k,j}f^{l-1}_j\otimes\rho(\iota(a_{jk}))w_k\rangle=\sum_{k,j}\langle\sum_if^{l-1}_i\otimes v_i,f^{l-1}_j\otimes\rho(\iota(a_{jk}))w_k\rangle\\
    =&\sum_{k,j}\langle f^{l-1}_j\otimes v_j,f^{l-1}_j\otimes\rho(\iota(a_{jk}))w_k\rangle=\sum_{k,j}\langle v_j,\rho(\iota(a_{jk}))w_k\rangle_\rho\\
    =&\sum_{k,j}\langle v_j,\rho(a_{jk})^*w_k\rangle_\rho=\sum _{k,j}\langle \rho(a_{jk})v_j,w_k\rangle_\rho.\qedhere
\end{align*}
\end{proof}

\begin{theorem}\label{minmax}\cite[Theorem 2.19]{teschl} Let $T$ be a bounded self adjoint operator on a Hilbert space $H$. Then
\begin{equation*}
    \inf\sigma(T)=\inf_{v\ne 0}\frac{\langle Tv,v\rangle}{\langle v,v\rangle}.
\end{equation*}
\end{theorem}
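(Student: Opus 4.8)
\noindent\textit{Proof proposal for Theorem \ref{minmax}.} The plan is to write $m:=\inf_{v\ne 0}\langle Tv,v\rangle/\langle v,v\rangle$ (a finite real number, since $|\langle Tv,v\rangle|\le\|T\|\,\|v\|^2$) and to establish the two inequalities $\inf\sigma(T)\ge m$ and $\inf\sigma(T)\le m$ separately. The first says the Rayleigh quotient bounds the spectrum from below; the second says that the infimum of the Rayleigh quotient is itself realized as an approximate eigenvalue, hence lies in $\sigma(T)$.

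First I would show $\sigma(T)\subset[m,\infty)$. Fix $\lambda<m$. For every $v$, Cauchy--Schwarz gives
\[
\|(T-\lambda)v\|\,\|v\|\ge\langle(T-\lambda)v,v\rangle=\langle Tv,v\rangle-\lambda\|v\|^2\ge(m-\lambda)\|v\|^2,
\]
so $T-\lambda$ is bounded below, hence injective with closed range. Since $T$ is self adjoint and $\lambda$ is real, $(T-\lambda)^*=T-\lambda$, so $(\operatorname{im}(T-\lambda))^\perp=\ker(T-\lambda)=0$; thus the range is dense, hence all of $H$, and $T-\lambda$ is invertible. Therefore $\lambda\notin\sigma(T)$, which gives $\inf\sigma(T)\ge m$.

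For the reverse inequality I would show $m\in\sigma(T)$. Set $S:=T-m$, which is positive ($\langle Sv,v\rangle\ge0$ for all $v$). By definition of $m$ there is a sequence of unit vectors $v_n$ with $\langle Sv_n,v_n\rangle\to0$. Applying Cauchy--Schwarz to the positive semidefinite Hermitian form $(x,y)\mapsto\langle Sx,y\rangle$ with $x=v_n$, $y=Sv_n$, and using $\langle S(Sv_n),Sv_n\rangle\le\|S\|\,\|Sv_n\|^2$, we get
\[
\|Sv_n\|^4=\langle Sv_n,Sv_n\rangle^2\le\langle Sv_n,v_n\rangle\,\langle S(Sv_n),Sv_n\rangle\le\|S\|\,\langle Sv_n,v_n\rangle\,\|Sv_n\|^2,
\]
so $\|Sv_n\|^2\le\|S\|\,\langle Sv_n,v_n\rangle\to0$. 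Thus $S$ is not bounded below, hence not invertible, i.e. $0\in\sigma(S)$ and $m\in\sigma(T)$; so $\inf\sigma(T)\le m$.

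The two estimates are routine Cauchy--Schwarz computations; the one structural point worth isolating is that a self adjoint operator which is bounded below is automatically surjective (closed range from the bound, dense range from self adjointness), which is exactly what upgrades ``$\lambda<m$'' to ``$\lambda$ lies in the resolvent set''. I do not anticipate any real obstacle here, as this is the classical min--max / Rayleigh quotient characterization of $\inf\sigma(T)$.
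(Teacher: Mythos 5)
Your proof is correct. The paper gives no argument of its own here --- it simply cites Teschl's Theorem 2.19 --- and your two-sided estimate (Cauchy--Schwarz plus self-adjointness to show every $\lambda<m$ is in the resolvent set, and the generalized Cauchy--Schwarz inequality for the positive form $\langle Sx,y\rangle$ to produce a Weyl sequence showing $m\in\sigma(T)$) is exactly the standard textbook proof that the citation points to.
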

\noindent The quantity $\langle Tv,v\rangle/\langle v,v\rangle$ is called the \textit{Rayleigh quotient of $v$ for $T$}.

The key observation for the proof of Theorem \ref{iso} is the following result.
\begin{prop}\label{contfell}
Let $\G$ be a group of type $F_{n+1}$. For all $l\le n$, the following map is continuous at all $\pi$ satisfying $\inf\sigma((\Delta_\pi)_l)=0$.
\begin{align*}
    \widehat{\G} &\to [0,\infty)\\
    \pi &\mapsto\inf\sigma((\Delta_\pi)_l).
\end{align*}
\end{prop}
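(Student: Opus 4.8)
The plan is to reduce to the algebraic description of the Laplacian from Lemma \ref{conjugate} and then feed near-harmonic cochains through the definition of the Fell topology. Fix $l\le n$ and put $m:=|Y^l|$. By Lemma \ref{conjugate}, under the identification $\c\G[Y^l]^*\otimes_{\c\G}V_\pi\cong V_\pi^m$ the operator $(\Delta_\pi)_l$ is $v=(v_1,\dots,v_m)\mapsto\big(\sum_j\pi(d_{ij})v_j\big)_{i}$, where $D_l=(d_{ij})\in M_m(\c\G)$ is a fixed matrix independent of $\pi$; writing $D_l=\iota(A_{l+1})^\top A_{l+1}+A_l\iota(A_l)^\top$ shows that $(\Delta_\pi)_l=(\Delta^+_\pi)_l+(\Delta^-_\pi)_l$ is positive, so $\inf\sigma((\Delta_\pi)_l)\ge 0$ for every $\pi$. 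Hence the function in question is non-negative, and to prove continuity at a $\pi_0$ where it vanishes it suffices to show: whenever $\pi_\alpha\to\pi_0$ in $\widehat\G$ one has $\limsup_\alpha\inf\sigma((\Delta_{\pi_\alpha})_l)\le 0$.

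Next I would turn the infimum of the spectrum into an infimum of Rayleigh quotients. By Theorem \ref{minmax}, $\inf\sigma((\Delta_\pi)_l)=\inf_{v\ne 0}\langle(\Delta_\pi)_lv,v\rangle/\|v\|^2$, and expanding, $\langle(\Delta_\pi)_lv,v\rangle=\sum_{i,j}\langle\pi(d_{ij})v_j,v_i\rangle=\sum_{g\in F}\sum_{i,j}c^g_{ij}\,\langle\pi(g)v_j,v_i\rangle$, where $F\subset\G$ is the finite set supporting the $d_{ij}=\sum_{g}c^g_{ij}g$; enlarging $F$ we may assume $e\in F$, so that also $\|v\|^2=\sum_i\langle\pi(e)v_i,v_i\rangle$. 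Thus the Rayleigh quotient of a tuple $v\in V_\pi^m$ is a fixed function (with a modulus of continuity not depending on $\pi$) of the finitely many numbers $\langle\pi(g)v_j,v_i\rangle$, $g\in F$, $1\le i,j\le m$. Now fix $\epsilon>0$; since $\inf\sigma((\Delta_{\pi_0})_l)=0$, pick a unit vector $v\in V_{\pi_0}^m$ with $\langle(\Delta_{\pi_0})_lv,v\rangle<\epsilon$. By the definition of the Fell topology, for every $\delta>0$ the set of $\pi\in\widehat\G$ admitting a tuple $w=(w_1,\dots,w_m)\in V_\pi^m$ with $|\langle\pi(g)w_j,w_i\rangle-\langle\pi_0(g)v_j,v_i\rangle|<\delta$ for all $g\in F$ and all $i,j$ is a neighborhood of $\pi_0$; choosing $\delta$ small enough (depending on $\epsilon$, $F$, $m$ and the $c^g_{ij}$) forces $\langle(\Delta_\pi)_lw,w\rangle<2\epsilon$ and $\|w\|^2>1/2$, hence $\inf\sigma((\Delta_\pi)_l)<4\epsilon$ for every $\pi$ in that neighborhood. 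So $\inf\sigma((\Delta_{\pi_\alpha})_l)<4\epsilon$ eventually; as $\epsilon$ was arbitrary and these quantities are $\ge 0$, $\inf\sigma((\Delta_{\pi_\alpha})_l)\to 0$, as required.

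The only genuinely topological input is the transport of the near-harmonic tuple from $V_{\pi_0}$ to $V_\pi$ with all (including off-diagonal) matrix coefficients controlled, which is exactly Fell's description of the basic neighborhoods of $\pi_0\in\widehat\G$; I do not expect an obstacle beyond invoking this correctly, together with the bookkeeping that fixes $\delta$ in terms of $\epsilon$. One could also argue more $C^*$-algebraically, bypassing Theorem \ref{minmax}: regard $D_l$ as a positive element of the unital $C^*$-algebra $M_m(C^*(\G))$, note that $\pi\mapsto\mathrm{id}_{M_m}\otimes\pi$ is a homeomorphism of $\widehat\G$ onto the spectrum of $M_m(C^*(\G))$, and combine the identity $\inf\sigma(\rho(b))=\|b\|-\|\rho(\|b\|\cdot 1-b)\|$ for $b\ge 0$ with the lower semicontinuity of $\rho\mapsto\|\rho(\cdot)\|$ on the spectrum to deduce upper semicontinuity of $\pi\mapsto\inf\sigma((\Delta_\pi)_l)$ at once; but the first argument keeps everything within the results already established here.
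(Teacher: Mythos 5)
Your overall strategy is the same as the paper's: Lemma \ref{conjugate} to write $(\Delta_\pi)_l$ as a fixed matrix $D_l$ over $\c\G$ acting on $V_\pi^m$, Theorem \ref{minmax} to convert $\inf\sigma$ into an infimum of Rayleigh quotients, and transport of a near-minimizing cochain from $V_{\pi_0}$ to nearby representations. The gap is the step you label ``by the definition of the Fell topology.'' The basic neighbourhoods of an irreducible $\pi_0$ in $\widehat{\G}$ (e.g.\ \cite[Proposition F.2.4]{BdlHV}, which is what the paper invokes) are cut out by approximating finitely many \emph{diagonal} matrix coefficients $g\mapsto\langle\pi_0(g)v,v\rangle$, each by \emph{some} function of positive type of $\rho$, with no coherence imposed between the approximating vectors for different coefficients. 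What your argument needs is strictly stronger: a \emph{single} tuple $w\in V_\rho^m$ whose entire array of matrix coefficients, off-diagonal entries included, is uniformly close on $F$ to that of $v$. Polarization alone does not close this, because the functions of positive type it produces for $\rho$ need not come from vectors satisfying the linear relations $w_j+i^kw_i$ for one common tuple $(w_i)$.

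The statement you want is nevertheless true precisely because $\pi_0$ is irreducible, and it deserves a proof or a citation: pick a unit vector $\xi\in V_{\pi_0}$, use irreducibility (so $\xi$ is cyclic) to write $v_i\approx\pi_0(b_i)\xi$ with $b_i\in\c\G$, approximate the single function of positive type $\langle\pi_0(\cdot)\xi,\xi\rangle$ by one of $\rho$ on the finite union of the supports of the elements $\iota(b_i)gb_j$, $g\in F$, and set $w_i:=\rho(b_i)\eta$; alternatively invoke Fell's theorem on convergence of matrix element functions at irreducible points of the dual. With that supplied, your proof is correct and is essentially the paper's, which instead polarizes the Rayleigh quotient into functions of positive type before applying \cite[Proposition F.2.4]{BdlHV} (and implicitly needs the same coherence in order to recognize the perturbed expression as a Rayleigh quotient for the nearby representation, so your formulation at least isolates the statement that is actually used). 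Your $C^*$-algebraic aside is in fact the cleanest way out: $D_l=A_{l+1}^*A_{l+1}+A_lA_l^*$ is a positive element of a matrix algebra over $C^*(\G)$, the identity $\inf\sigma(\rho(b))=\|b\|-\|\rho(\|b\|\cdot 1-b)\|$ holds for $b\ge 0$, and lower semicontinuity of $\rho\mapsto\|\rho(a)\|$ on the spectrum gives upper semicontinuity of $\pi\mapsto\inf\sigma((\Delta_\pi)_l)$ everywhere, hence continuity at its zeros, with no tuple bookkeeping at all.
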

\begin{proof}
Let us find the expression for the Rayleigh quotient of a generic element $\sum_if_i^l\otimes v_i\in\c\G[Y^l]^*\otimes V_\pi$ for $(\Delta_\pi)_l$. By Lemma \ref{conjugate}, $(\Delta_\pi)_l$ is of the form $\Phi\otimes 1$, where $\Phi$ is given by a matrix $D$, whose elements we denote by $d_{ij},(i,j)\in I_l\times I_l$. Then the numerator of the Rayleigh quotient is
\begin{align}
\begin{split}\label{numerator}
    &\langle(\Delta_\pi)_l(\sum_if^l_i\otimes v_i),\sum_kf^l_k\otimes v_k\rangle=\langle\sum_{i,j}f_j\otimes\pi(d_{ij})v_i,\sum_kf^l_k\otimes v_k\rangle\\
    =~&\sum_{i,j}\langle f_j\otimes\pi(d_{ij})v_i,\sum_kf^l_k\otimes v_k\rangle
    =\sum_{i,j}\langle f_j\otimes\pi(d_{ij})v_i,f^l_j\otimes v_j\rangle=\sum_{i,j}\langle\pi(d_{ij})v_i,v_j\rangle_\pi,
\end{split}
\end{align}
while the denominator is $\sum_i\|v_i\|^2$. For a fixed $\g\in\G$, the map $(v,w)\mapsto\langle\pi(\g)v,w\rangle_\pi$ is a sesquilinear form on $V_\pi$. Hence by the polarization identity for sesquilinear forms (see for example \cite[Proposition A.59]{hall}), any matrix coefficient can be written as a linear combination of diagonal matrix coefficients, that is, functions of positive type. Thus the final expression in (\ref{numerator}) can be written as a linear combination of functions of positive type associated to $\pi$ evaluated at the finite number of elements in $\G$ which appear in $d_{ij}$, for all $i,j$. The terms $\|v_i\|^2$ in the denominator are also functions of positive type evaluated at the identity. Thus the Rayleigh quotient is of the form $F(\phi_1(\g_1),\cdots,\phi_m(\g_m))$, where $F$ is a rational function,  $\phi_1,\cdots,\phi_m$ are functions of positive type and $\g_1,\cdots,\g_m\in\G$. Note that as we vary over different  representations $\pi$ and different elements $\sum_if_i^l\otimes v_i$ in $\c\G[Y^l]\otimes_{\c\G}V_\pi$, the functions $\phi_1,\cdots,\phi_m$ vary, but not the elements $\g_1,\cdots,\g_m$ of $\G$. Let $K:=\{\g_1,\cdots,\g_m\}$.

To show continuity we will show that if $\{\pi_\alpha\}$ is a net in $\widehat{\G}$ such that $\pi_\alpha\to\pi$, with $\inf\sigma((\Delta_\pi)_l)=0$, then $\inf\sigma((\Delta_{\pi_\alpha})_l)\to 0$. Fix any $\epsilon>0$. By Theorem \ref{minmax} and the above discussion, there exists functions of positive type $\phi_1,\cdots,\phi_m$ associated to $\pi$ such that
\begin{equation}\label{ineq1}
    F(\phi_1(\g_1),\cdots,\phi_m(\g_m))<\epsilon/2.
\end{equation}
By continuity of $F$, there exists $\delta>0$ such that whenever $x_i\in\c$ satisfies $|x_i-\phi_i(\g_i)|<\delta$ for all $1\le i\le m$, $|F(x_1,\cdots,x_m)-F(\phi_1(\g_1),\cdots,\phi_m(\g_m))|<\epsilon/2$. Let $\tilde{W}(\pi,\phi_1,\cdots,\phi_m,K,\delta)\subset\widehat{\G}$ be the set of representations $\rho$ such that there exists functions of positive type $\psi_1,\cdots,\psi_m$ associated to $\rho$ with $|\phi_i(\g)-\psi_i(\g)|<\delta$ for all $1\le i\le m$ and $\g\in K$. Since $\pi$ is irreducible, by \cite[Proposition F.2.4]{BdlHV}, $\tilde{W}(\pi,\phi_1,\cdots,\phi_m,K,\delta)$ is a neighbourhood of $\pi$. Since $\pi_\alpha\to\pi$, there exists $\alpha_0$ such that for all $\beta>\alpha_0$, $\pi_\beta\in \tilde{W}(\pi,\phi_1,\cdots,\phi_m,K,\delta)$. Then there exist functions of positive type $\xi_1,\cdots,\xi_m$ associated to $\pi_\beta$ such that 
\begin{equation}\label{ineq2}
    F(\xi_1(\g_1),\cdots,\xi_m(\g_m))-F(\phi_1(\g_1),\cdots,\phi_m(\g_m))<\epsilon/2.
\end{equation}
Combining (\ref{ineq1}) and (\ref{ineq2}) we get $F(\xi_1(\g_1),\cdots,\xi_m(\g_m))<\epsilon$. Hence $\inf\sigma((\Delta_{\pi_\beta})_l)<\epsilon$, for all $\beta>\alpha_0$. This finishes the proof.
\end{proof}

\noindent\textit{Proof of Theorem \ref{iso}}.  Note that Property ($\text{T}_n$) and Property  ($\text{T}_{n-1}$) are equivalent to $(\Delta^+_\pi)_n$ and $(\Delta^-_\pi)_n$ having essential gaps, respectively, for all unitary representations $\pi$. The proof is by contradiction.  Suppose $\G$ has ($\text{T}_n$) and Property ($\text{T}_{n-1}$). All $n^\text{th}$ homology groups are reduced. Suppose $\mathcal{R}:=\{\pi\in\widehat{\G}:H^n(\G,\pi)\ne 0\}$ is not open in $\widehat{\G}$. Then there exists $\pi_0\in\mathcal{R}$ such that  for every neighbourhood $U$ of $\pi_0$ there exists  $\pi_U\in U\cap (\widehat{\G}\setminus\mathcal{R})$. Since $H^n(\G,\pi_0)\ne 0$ therefore $\inf\sigma((\Delta_{\pi_0})_n)=0$. By Proposition \ref{contfell}, $\inf\sigma((\Delta_{\pi_U})_n)\to 0$. Note that $0\notin\{\inf\sigma((\Delta_{\pi_U})_n)\}_U$. Since $\r$ is second countable, we replace the family $\{\pi_U\}_U$ by a countable sub family $\{\pi_i\}_i$  such that $\inf\sigma((\Delta_{\pi_i})_n)\to 0$. Let $\rho$ be the Hilbert direct sum of the representations $\{\pi_i\}_i$. Then by an argument similar to the proof of Corollary \ref{specgap}, $(\Delta_\rho)_n$ does not have essential gap. Hence either $(\Delta^+_\rho)_n$ or $(\Delta^-_\rho)_n$ does not have essential gap.\qed


\section*{Acknowledgements}
 I thank Anish Ghosh for his encouragement and continued engagement with this project. While I was looking for a higher dimensional analog of Cayley graph of a group, Swathi Krishna told me about Cayley complex. Considering the $n^\text{th}$ skeleton of the universal cover of a $K(\G,1)$ complex was a small step from there.


\begin{thebibliography}{99}

\bibitem{am} N. Alon, V. D. Milman, $\lambda_1$, isoperimetric inequalities for graphs and superconcentrators, J. Comb. Th. \textbf{B 38} (1985) 78--88.

\bibitem{austin} T. Austin, Euclidean-valued group cohomology is always reduced, J. Topol. Anal. \textbf{10} (2018) 483--491. 

\bibitem{bn15} U. Bader, P. W. Nowak, Cohomology of deformations, J. Topol. Anal. \textbf{7} (2015) 81--104.

\bibitem{bn20} U. Bader, P. W. Nowak, Group algebra criterion for vanishing of cohomology, J. Funct. Anal. \textbf{279} (2020) 11 pp.

\bibitem{bs} W. Ballmann, J. \'Swiatkowski, On $L^2$-cohomology and property (T) for automorphism groups of polyhedral cell complexes,
Geom. Funct. Anal. \textbf{7} (1997) 615--645.

\bibitem{BdlHV} B. Bekka, P. de la Harpe, A. Valette. \emph{Kazhdan's Property (T)}, Cambridge University Press, 2008. 


\bibitem{bc} N. Bergeron, L. Clozel, Spectre automorphe des vari\'et\'es hyperboliques et
applications topologiques, Ast\'erisque \textbf{303} (2005) xx+218 pp.


\bibitem{blanc} P. Blanc, Sur la cohomologie continue des groupes localement compacts. (French) Ann. Sci. École Norm. Sup. (4) \textbf{12} (1979) 137--168.

\bibitem{borel} A. Borel, Cohomologie de certains groupes discretes et laplacien p-adique (d’apr\'es H. Garland), Séminaire Bourbaki, 26e ann\'ee (1973/1974), Exp. No. 437. Lecture Notes in Mathematics, vol. 431, pp. 12–35. Springer, Berlin (1975).


\bibitem{brown} K. Brown, \textit{Cohomology of groups}, Springer, 1982.

\bibitem{dglt} M. De Chiffre, L. Glebsky, A. Lubotzky, A. Thom, Stability, cohomology vanishing, and nonapproximable groups, Forum Math. Sigma \textbf{8} (2020) Paper No. e18, 37 pp.

\bibitem{dj1} J. Dymara, T. Januszkiewicz, New Kazhdan groups, Geom. Dedicata \textbf{80} (2000) 311--317.

\bibitem{dj2} J. Dymara, T. Januszkiewicz, Cohomology of buildings and their automorphism groups, Invent. Math. \textbf{150} (2002) 579--627.

\bibitem{eckmann45} B. Eckmann, Harmonische funktionen und randwertaufgaben in einem komplex, Comment. Math. Helv. \textbf{17} (1945) 240--255.



\bibitem{garland} H. Garland, $p$-adic curvature and the cohomology of discrete subgroups of $p$-adic groups, Ann. of Math. (2) \textbf{97} (1973) 375--423.



\bibitem{hall} B. C. Hall, \textit{Quantum theory for mathematicians}, Springer, 2013.

\bibitem{hatcher} A.Hatcher, \textit{Algebraic topology}, Cambridge University Press, Cambridge, 2002, xii+544 pp.



\bibitem{ko1} T. Kaufman, I. Oppenheim, Construction of new local spectral high dimensional expanders, STOC'18—Proceedings of the 50th Annual ACM SIGACT Symposium on Theory of Computing, 773--786, ACM, New York, 2018.

\bibitem{ko2} T. Kaufman and I. Oppenheim, Higher dimensional expanders and coset geometries, arXiv:1710.05304v3.


\bibitem{lubotzky94} A. Lubotzky, \textit{Discrete Groups, Expanding Graphs and Invariant Measures}, Birkha\"user Verlag, 1994

\bibitem{lubotzky18} A. Lubotzky, High dimensional expanders, in \textit{Proceedings of the International Congress of Mathematicians—Rio de Janeiro 2018. Vol. I. Plenary lectures}, World Sci. Publ., Hackensack, NJ (2018)
705--730.

\bibitem{margulis} G. A. Margulis, Explicit constructions of concentrators, Probl. of Inform. Transm. \textbf{10} (1975) 325–332.

\bibitem{matsushima62} Y. Matsushima, On Betti numbers of compact, locally sysmmetric Riemannian manifolds, Osaka Math. J. \textbf{14} (1962) 1--20.

\bibitem{mok} N. Mok, Harmonic forms with values in locally constant Hilbert bundles, Proceedings of the Conference in Honor of Jean-Pierre Kahane (Orsay, 1993), J. Fourier Anal. Appl. 1995, Special Issue, 433--453.

\bibitem{o1} I. Oppenheim, Vanishing of cohomology and property (T) for groups acting on weighted simplicial complexes, Groups Geom. Dyn. \textbf{9} (2015) 67--101.

\bibitem{o2} I. Oppenheim, Property (T) for groups acting on simplicial complexes through taking an ``average" of Laplacian eigenvalues,
Groups Geom. Dyn. \textbf{9} (2015) 1131--1152.

\bibitem{o3} I. Oppenheim, Local spectral expansion approach to high dimensional expanders Part I: Descent of spectral gaps, Discrete Comput. Geom. \textbf{59} (2018) 293--330.


\bibitem{pansu98} P. Pansu, Formules de Matsushima, de Garland et propri\'et\'e (T) pour des groupes agissant sur des espaces symétriques ou des immeubles, Bull. Soc. Math. France \textbf{126} (1998) 107--139.


\bibitem{shalom} Y. Shalom, Rigidity of commensurators and irreducible lattices, Invent. Math. \textbf{141} (2000) 1--54.

\bibitem{teschl} G. Teschl, \textit{Mathematical methods in quantum mechanics.
With applications to Schrödinger operators}, Graduate Studies in Mathematics, 99, American Mathematical Society, Providence, RI, 2009.



\bibitem{whitehead} J. H. C. Whitehead, Combinatorial homotopy I, Bull. Amer. Math. Soc. \textbf{55} (1949) 213--245.

\bibitem{zuk} A. Zuk, La propri\'et\'e (T) de Kazhdan pour les groupes agissant sur les poly\`edres, C. R. Acad. Sci. Paris S\'er. I Math. \textbf{323} (1996) 453--458. 
\end{thebibliography}
\end{document}